\def\refer#1{~\ref{#1}}
\def\refeq#1{~(\ref{#1})}
\def\ccite#1{~\cite{#1}}
\def\longformule#1#2{
\displaylines{ \qquad{#1} \hfill\cr \hfill {#2} \qquad\cr } }
\def\inte#1{
\displaystyle\mathop{#1\kern0pt}^\circ }
\let\pa=\partial
\let\al=\alpha
\let\d=\delta
\let\e=\varepsilon
\let\ep=\varepsilon
\let\lam=\lambda
\let\r=\rho
\let\s=\sigma
\let\f=\frac
\let\p=\psi
\let\D=\Delta
\let\Om=\Omega
\let\wt=\widetilde
\let\wh=\widehat
\def\cB{{\mathcal B}}
\def\cC{{\mathcal C}}
\def\cF{{\mathcal F}}
\def\cL{{\mathcal L}}
\def\cM{{\mathcal M}}
\def\cR{{\mathcal R}}
\def\cS{{\mathcal S}}
\def\grad{\nabla}
\def\la{\lambda}
\def\fB{\frak{B}}
\def\virgp{\raise 2pt\hbox{,}}
\def\cdotpv{\raise 2pt\hbox{;}}
\def\eqdefa{\buildrel\hbox{\footnotesize def}\over =}
\def\Id{\mathop{\rm Id}\nolimits}
\def\C{\mathop{\mathbb C\kern 0pt}\nolimits}
\def\DD{\mathop{\mathbb D\kern 0pt}\nolimits}
\def\EE{\mathop{{\mathbb E \kern 0pt}}\nolimits}
\def\K{\mathop{\mathbb K\kern 0pt}\nolimits}
\def\N{\mathop{\mathbb N\kern 0pt}\nolimits}
\def\Q{\mathop{\mathbb Q\kern 0pt}\nolimits}
\def\R{\mathop{\mathbb R\kern 0pt}\nolimits}
\def\SS{\mathop{\mathbb S\kern 0pt}\nolimits}
\def\ZZ{\mathop{\mathbb Z\kern 0pt}\nolimits}
\def\TT{\mathop{\mathbb T\kern 0pt}\nolimits}
\def\P{\mathop{\mathbb P\kern 0pt}\nolimits}
\newcommand{\Z}{{\ZZ}}
\def\dv{\mbox{div}}
\def\dive{\mathop{\rm div}\nolimits}
\def\Supp{\mathop{\rm Supp}\nolimits\ }
\def\no{\noindent}
\def\na{\nabla}
\def\p{\partial}
\newcommand{\beq}{\begin{equation}}
\newcommand{\eeq}{\end{equation}}
\newcommand{\ben}{\begin{eqnarray}}
\newcommand{\een}{\end{eqnarray}}
\newcommand{\beno}{\begin{eqnarray*}}
\newcommand{\eeno}{\end{eqnarray*}}
\newcommand{\andf}{\quad\hbox{and}\quad}
\newcommand{\with}{\quad\hbox{with}\quad}
\newtheorem{defi}{Definition}[section]
\newtheorem{thm}{Theorem}[section]
\newtheorem{lem}{Lemma}[section]
\newtheorem{rmk}{Remark}[section]
\renewcommand{\theequation}{\thesection.\arabic{equation}}
\begin{document}
\title[Large solutions of $3-$D inhomogeneous NS equations]
{Global large solutions  to 3-D inhomogeneous
 Navier-Stokes system with one slow variable}
 \author[J.-Y. CHEMIN]{Jean-Yves Chemin}
\address [J.-Y. Chemin]%
{Laboratoire J.-L. Lions, UMR 7598 \\
Universit\'e Pierre et Marie Curie, 75230 Paris Cedex 05, FRANCE }
\email{chemin@ann.jussieu.fr}
\author[M. PAICU]{Marius Paicu}
\address [M. PAICU]%
{Universit\'e  Bordeaux 1\\
 Institut de Math\'ematiques de Bordeaux\\
F-33405 Talence Cedex, France} \email{marius.paicu@math.u-bordeaux1.fr}
\author[P. ZHANG]{Ping Zhang}%
\address[P. ZHANG]
 {Academy of
Mathematics $\&$ Systems Science and  Hua Loo-Keng Key Laboratory of
Mathematics, The Chinese Academy of Sciences\\
Beijing 100190, CHINA } \email{zp@amss.ac.cn}
\date{10/11/2012}
\maketitle
\begin{abstract} In this paper, we are concerned with the global
wellposedness of 3-D  inhomogeneous incompressible Navier-Stokes
equations \eqref{1.3} in  the critical Besov spaces with the norm of
which are invariant by the  scaling of the equations and under a
nonlinear smallness condition on the isentropic critical Besov norm
to the fluctuation of the initial density and the critical
anisotropic Besov norm of the horizontal components of the initial
velocity which have to be exponentially small compared with the
critical anisotropic Besov norm to the third component of the
initial velocity. The novelty of this results is that the isentropic
space structure to the homogeneity of the initial density function
is consistent with the propagation of anisotropic regularity for the
velocity field. In the second part,
 we apply the same idea to prove the
global wellposedness of \eqref{1.3} with some large data which are
slowly varying in one direction.
\end{abstract}

\noindent {\sl Keywords:} Inhomogeneous  Navier-Stokes Equations,
Littlewood-Paley Theory, Anisotropic Besov spaces \

\vskip 0.2cm

\noindent {\sl AMS Subject Classification (2000):} 35Q30, 76D03  \

\setcounter{equation}{0}
\section{Introduction}

In this paper, we consider the  global wellposeness  to the
following 3-D incompressible inhomogeneous  Navier-Stokes equations
with initial data in the critical  Besov spaces and with the third
component of the initial velocity being large:
\begin{equation}
 \left\{\begin{array}{l}
\displaystyle \pa_t \rho + \dv(\rho u)=0,\qquad (t,x)\in\R^+\times\R^3, \\
\displaystyle \pa_t (\rho u) + \dv (\rho u\otimes u) -\mu \D u+\grad\Pi=0, \\
\displaystyle \dv\, u = 0, \\
\displaystyle \rho|_{t=0}=\rho_0,\quad \rho u|_{t=0}=\r_0u_0,
\end{array}\right. \label{1.1}
\end{equation}
where $\rho, u=(u_1,u_2, u_3)$ stand for the density and  velocity
of the fluid respectively,  $\Pi$ is a scalar pressure function.
  Such system describes a fluid which is obtained by
mixing two immiscible fluids that are incompressible and that have
different densities. It may also describe a fluid containing a
melted substance.

 In\ccite{LS}, O. Lady\v zenskaja and V. Solonnikov
first addressed  the question of unique resolvability of
(\ref{1.1}). More precisely, they  considered the system \eqref{1.1}
in bounded domain $\Om$ with homogeneous Dirichlet boundary
condition for $u.$ Under the assumption that $u_0$ belongs to~$
W^{2-\frac2p,p}(\Om)$ with $p$ greater than~$d$,  is divergence free and vanishes on
$\p\Om$ and that $\r_0$ is~$C^1(\Om)$,  bounded and  away from zero, then
they  proved
\begin{itemize}
\item Global well-posedness in dimension $d=2;$
\item Local well-posedness in dimension $d=3.$ If in addition $u_0$ is small in $W^{2-\frac2p,p}(\Om),$
then global well-posedness holds true.
\end{itemize}
 Similar results were obtained by R. Danchin
\cite{danchin2} in $\R^d$ with initial data in the almost critical
Sobolev spaces. In  general, the global existence of weak solutions
with finite energy was established by P.-L. Lions in \cite{ LP} (see also
the reference therein, and the monograph \cite{AKM}).   H. Abidi, G. Gui
and the last author  established  in\ccite{A-G-Z}the large time decay and
stability to any given global smooth solutions of \eqref{1.1}.

When the initial density is away from zero, we denote by
$a\eqdefa\frac{1}{\rho}-1,$ and then \eqref{1.1} can be equivalently
formulated as
\begin{equation}\label{1.3}
 \quad\left\{\begin{array}{l}
\displaystyle \pa_t a + u \cdot \grad a=0,
\hspace{1cm}(t,x)\in \R^+\times\R^3,\\
\displaystyle \pa_t u + u \cdot \grad u+ (1+a)(\grad \Pi-\mu\Delta u)=0, \\
\displaystyle \dv\, u = 0, \\
\displaystyle (a, u)|_{t=0}=(a_0, u_{0}).
\end{array}\right.
\end{equation}
Notice that just as the classical Navier-Stokes system (which
corresponds to  $a=0$ in \eqref{1.3}), the inhomogeneous
Navier-Stokes system (\ref{1.3}) also has a scaling. Indeed if $(a,
u)$ solves (\ref{1.3}) with initial data $(a_0, u_0)$, then for
$\forall \, \ell>0$,
\begin{equation}\label{1.2}
(a, u)_{\ell} \eqdefa (a(\ell^2\cdot, \ell\cdot), \ell u(\ell^2
\cdot, \ell\cdot))\quad\mbox{and}\quad (a_0,u_0)_\ell\eqdefa
(a_0(\ell\cdot),\ell u_0(\ell\cdot))
\end{equation}
 $(a, u)_{\ell}$ is also a solution of (\ref{1.3}) with initial data $(a_0,u_0)_\ell$.

\medbreak

It is easy to check that the norm of
$B_{p,1}^{\frac{d}p}(\R^d)\times B_{p,1}^{-1+\frac{d}p}(\R^d)$ is
scaling invariant under the scaling transformation $(a_0,u_0)_\ell$
given by \eqref{1.2}. In \cite{abidi}, H. Abidi proved in general space
dimension $d$ that: if $1<p<2d,$ $ 0<\underline{\mu}<\mu(\r),$ given
 initial  data  $(a_0,u_0)$ sufficiently small in~$ B_{p,1}^{\frac{d}p}(\R^d)\times B_{p,1}^{-1+\frac{d}p}(\R^d),$ (\ref{1.3}) has a global
solution.
 Moreover, this
solution is unique if $p$ is in~Ê$]1,d[.$  This result generalized the
wellposedness  results of R. Danchin in\ccite{danchin} and\ccite{ danchin2}, which
corresponds to the celebrated results by Fujita and Kato
\cite{fujitakato} devoted to the classical Navier-Stokes system, and
was improved by H. Abidi and the second author in \cite{AP} with  $a_0$ in~$B_{q,1}^{\frac{d}q}(\R^d)$ and $u_0$ in~$ B_{p,1}^{-1+\frac{d}p}(\R^d)$
for $p,q$ satisfying some technical assumptions.  H. Abidi, G. Gui and
the last author removed the smallness condition for $a_0$ in \cite{AGZ2,
AGZ3}. Notice that  the main feature of the density space is to be a
multiplier on the velocity space and this allows to define the
nonlinear terms in the system \eqref{1.1}. Recently, R. Danchin and
P. Mucha  proved in\ccite{DM} a more general wellposedness result of
\eqref{1.1}  by considering very rough densities in some multiplier
spaces on the Besov spaces~$B_{p,1}^{-1+\frac{d}p}(\R^d)$ for~$p$ in~Ê$]1,2d[$ which in particular completes the uniqueness result in
\cite{abidi} for $p$ in~$] d,2d[$ in the constant viscosity case.

Motivated by \cite{GZ2, PZ1, Zhangt} concerning the global
wellposedness of 3-D incompressible aniso\-tropic Navier-Stokes system
with the third component of the initial velocity field being large,
the last two authors relaxed in\ccite{PZ2} the smallness condition in \cite{AP} so that
\eqref{1.3} still has a unique global solution (see Theorem
\ref{thpz1} below for details). We emphasize that the proof in
\cite{PZ2} used in a fundamental way the algebraical structure of
\eqref{1.3}. The first step is to obtain energy estimates on the
horizontal components of the velocity field on the one hand and then
on the vertical component on the other hand. Compared with \cite{
GZ2, PZ1, Zhangt}, the additional difficulties with this strategy
are that: there appears a hyperbolic type equation in \eqref{1.3}
and due to the appearance of $a$ in the momentum equation of
\eqref{1.3}, the pressure term is more difficult to be handled. We
remark that the equation on the vertical component of the velocity
field is a linear equation with coefficients depending on the
horizontal components of the velocity field and $a.$ Therefore, the
equation on the vertical component does not demand any smallness
condition. While the equations on the horizontal components of the
velocity field contain bilinear terms in the horizontal components
and also terms taking into account the interactions between the
horizontal components and the vertical one. In order to solve this
equation, we need a smallness condition on $a$ and the horizontal
component (amplified by the vertical component) of the initial data.
The purpose of this paper is to prove the global wellposedness of
\eqref{1.3} with initial data, $a_0, u_0=(u_0^h,u_0^3),$ satisfying
some nonlinear smallness condition on the critical isentropic Besov
norm to $a_0$ and the critical anisotropic Besov norm to $u_0^h$
which have to be exponentially small in contrast with the  critical
anisotropic Besov norm to $u_0^3.$ Then we apply the same idea to
prove the global wellposedness of \eqref{1.3} with some large data
which are slowly varying in one direction.

Before going further, we recall the functional space framework we
are going to use. As in\ccite{CDGG}, \ccite{CZ1} and \ccite{Pa02},
the definitions of the spaces we are going to work with requires
anisotropic dyadic decomposition   of the Fourier variables. Let us
recall from \cite{BCD} that \beq
\begin{split}
&\Delta_k^ha=\cF^{-1}(\varphi(2^{-k}|\xi_h|)\widehat{a}),\qquad
\Delta_\ell^va =\cF^{-1}(\varphi(2^{-\ell}|\xi_3|)\widehat{a}),\\
&S^h_ka=\cF^{-1}(\chi(2^{-k}|\xi_h|)\widehat{a}), \qquad\ S^v_\ell a
= \cF^{-1}(\chi(2^{-\ell}|\xi_3|)\widehat{a})
 \quad\mbox{and}\\
&\Delta_ja=\cF^{-1}(\varphi(2^{-j}|\xi|)\widehat{a}),
 \qquad\ \
S_ja=\cF^{-1}(\chi(2^{-j}|\xi|)\widehat{a}), \end{split}
\label{1.0}\eeq where $\xi_h=(\xi_1,\xi_2),$ $\cF a$ and
$\widehat{a}$ denote the Fourier transform of the distribution $a,$
$\chi(x)$ and~$\varphi(\tau)$ are smooth functions such that \beno
\begin{split}
&\Supp \varphi \subset \Bigl\{\tau \in \R\,/\  \ \frac34\leq |\tau|
\leq \frac83 \Bigr\}\andf \  \ \forall
 \tau>0\,,\ \sum_{j\in\Z}\varphi(2^{-j}\tau)=1,\\
&\Supp \chi \subset \Bigl\{\tau \in \R\,/\  \ \ |\tau| \leq \frac43
\Bigr\}\quad \ \ \andf \  \ \, \chi(\tau)+ \sum_{j\geq
0}\varphi(2^{-j}\tau)=1.
\end{split}
 \eeno

\begin{defi}\label{def1.1}
{\sl  Let $(p,r)\in[1,+\infty]^2,$ $s\in\R$ and $u\in{\mathcal
S}_h'(\R^3),$ which means that $u\in\cS'(\R^3)$ and
$\lim_{j\to-\infty}\|S_ju\|_{L^\infty}=0,$ we set
$$
\|u\|_{B^s_{p,r}}\eqdefa\Big(2^{qs}\|\Delta_q u\|_{L^{p}}\Big)_{\ell
^{r}}.
$$
\begin{itemize}

\item
For $s<\frac{3}{p}$ (or $s=\frac{3}{p}$ if $r=1$), we define $
B^s_{p,r}(\R^3)\eqdefa \big\{u\in{\mathcal S}_h'(\R^3)\;\big|\; \|
u\|_{B^s_{p,r}}<\infty\big\}.$

\item
If $k\in\N$ and $\frac{3}{p}+k\leq s<\frac{3}{p}+k+1$ (or
$s=\frac{3}{p}+k+1$ if $r=1$), then $ B^s_{p,r}(\R^3)$ is defined as
the subset of distributions $u\in{\mathcal S}_h'(\R^3)$ such that
$\partial^\beta u\in B^{s-k}_{p,r}(\R^3)$ whenever $|\beta|=k.$
\end{itemize}

\noindent
{\bf Notations } In all that follows, we shall denote
$$
\cB^s_p\eqdefa B^{s}_{p,1}.
$$
}
\end{defi}

The following theorem was proved by the last two authors in \cite{PZ2}:
\begin{thm}\label{thpz1}
{\sl Let~$p$ be in~$]1,6[$. There
exist positive constants $c_0$ and $C_0$ such that, for any  data
$a_0$ in~$\cB^{\f3p}_{p}(\R^3)$ and $u_0=(u_0^h,u_0^3)$ in~$
\cB^{-1+\frac3p}_{p}(\R^3)$ verifying  \beq \label{1.4} \eta\eqdefa
\bigl(\mu\|a_0\|_{\cB_{p}^{\f3p}}+\|u_0^h\|_{\cB^{-1+\frac3p}_{p}}\bigr)\exp\Bigl(
\frac {C_0} {\mu^2} \|u_0^3\|_{\cB^{-1+\frac3p}_{p}}^2\ \Big)\leq c_0\mu,
\eeq
the system~\eqref{1.3} has a unique  global solution $(a,u)$ in the space
$$
\cC_b([0,\infty[;\cB^{\f3 p}_{p}(\R^3))\times\bigl(\cC_b([0,\infty) ;\cB^{-1+\frac3p}_{p}(\R^3))
\cap L^1(\R^+,\cB^{1+\frac3p}_{p}(\R^3))\bigr).
$$
}
\end{thm}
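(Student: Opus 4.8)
The plan is to construct the solution by a standard approximation scheme (for instance Friedrichs regularization, or resolution of the linearized system by a fixed-point argument), to derive uniform a~priori estimates under the smallness assumption \eqref{1.4}, and then to pass to the limit; global existence then follows by a continuation argument. The crucial algebraic observation, already stressed in the introduction, is that the divergence-free condition $\dv\,u=0$ turns the convection term in the equation for the vertical component into one that is \emph{linear} in $u^3$: writing $u\cdot\grad u^3=u^h\cdot\grad_h u^3-u^3\,\dv_h u^h$, the coefficients depend only on $u^h$. Consequently the vertical equation requires no smallness, while smallness will be imposed only on $a_0$ and $u_0^h$.

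First I would estimate $a$. Since $a$ solves the transport equation $\pa_t a+u\cdot\grad a=0$ with the divergence-free field $u$, its critical Besov norm is propagated with a logarithmic-type loss, giving a bound of the form $\|a(t)\|_{\cB^{\f3p}_p}\lesssim \|a_0\|_{\cB^{\f3p}_p}\exp\bigl(C\int_0^t\|\grad u\|_{\cB^{\f3p}_p}\,d\tau\bigr)$. Next comes the pressure, which is the most delicate of the linear pieces: applying $\dv$ to the momentum equation and using $\dv\,u=0$ yields an elliptic equation of the form $\dv\bigl((1+a)\grad\Pi\bigr)=F(a,u)$, where $F$ gathers the quadratic convection terms and the term $\mu\,\dv(a\D u)$. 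Because $a$ is small, the operator $\mathrm{Id}+\D^{-1}\dv(a\grad\cdot)$ is invertible, and one recovers $\grad\Pi$ in the appropriate critical space, the small factor $\|a\|_{\cB^{\f3p}_p}$ controlling the variable-coefficient correction.

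The heart of the proof is the coupled estimate for $u^h$ and $u^3$, and this is where I expect the main difficulty and the origin of the exponential weight in \eqref{1.4} to lie. Treating the $u^3$ equation as a linear parabolic equation with coefficients built from $u^h$ and $a$, a standard energy estimate in $\cB^{-1+\f3p}_p$ controls $\|u^3\|_{L^\infty_t(\cB^{-1+\f3p}_p)}+\mu\|u^3\|_{L^1_t(\cB^{1+\f3p}_p)}$ in terms of $\|u_0^3\|_{\cB^{-1+\f3p}_p}$ and of the (small) horizontal norms; interpolating between these two bounds gives $\int_0^\infty\|u^3\|_{\cB^{\f3p}_p}^2\,d\tau\sim\|u_0^3\|_{\cB^{-1+\f3p}_p}^2/\mu$. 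For $u^h$ the equation contains the genuinely quadratic term $u^h\cdot\grad_h u^h$ together with the coupling $u^3\pa_3 u^h$. Estimating the latter by a Besov product law and absorbing $\pa_3 u^h$ into the parabolic gain, the Gronwall argument amplifies the initial horizontal data by a factor $\exp\bigl(\f{C}{\mu}\int_0^\infty\|u^3\|_{\cB^{\f3p}_p}^2\,d\tau\bigr)\sim\exp\bigl(\f{C_0}{\mu^2}\|u_0^3\|_{\cB^{-1+\f3p}_p}^2\bigr)$. Thus, provided $\bigl(\mu\|a_0\|_{\cB^{\f3p}_p}+\|u_0^h\|_{\cB^{-1+\f3p}_p}\bigr)$ times this exponential stays below $c_0\mu$, the horizontal norm remains small, which in turn validates the bounds on $u^3$ and on $a$; a bootstrap then closes the estimates on the whole time interval.

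Finally, uniqueness would be obtained by estimating the difference of two solutions in a space one derivative below the critical one, where the $B^s_{p,1}$ product laws still make sense; the restriction $p\in\,]1,6[$ is precisely what guarantees that the relevant paraproduct and remainder terms are controlled so that a Gronwall inequality closes, as in the classical Danchin uniqueness scheme.
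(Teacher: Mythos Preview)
This theorem is not proved in the present paper: it is quoted verbatim from \cite{PZ2} as background (see the sentence ``The following theorem was proved by the last two authors in \cite{PZ2}'' just before the statement). The paper's own contribution is the anisotropic analogue, Theorem~\ref{th1}, together with Theorem~\ref{thm5}. So strictly speaking there is no ``paper's proof'' to compare your proposal against.

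That said, your outline is essentially the strategy of \cite{PZ2}, and it matches the structure of the paper's proof of Theorem~\ref{th1} (Section~\ref{sect3}) quite closely: pressure recovered via a Neumann series as in Lemma~\ref{lemmapressureinhomo}, the divergence-free identity $\partial_3(u^3)^2=-2u^3\dv_h u^h$ to kill the quadratic-in-$u^3$ term, a linear estimate on $u^3$, and a bootstrap on $u^h$ that produces the factor $\exp\bigl(C_0\mu^{-2}\|u_0^3\|^2\bigr)$. Two technical devices used here that you do not mention are: (i) rather than a bare Gronwall on $a$, the paper works with the exponentially weighted unknowns $b_\lam(t)=b(t)\exp\bigl(-\lam\int_0^t\|u^3\|_{\fB^2_p}\bigr)$ (Lemma~\ref{prop4.1} and \eqref{p.1}), which is what makes the dangerous source term $\mu a\Delta u^3$ integrable in time without spending an extra power of $\|u_0^3\|$; and (ii) the $\|u^3\|_{\fB^1_p}$ contribution in the $u^h$ estimate is handled by decomposing $[0,T]$ into subintervals on which $\int\|u^3\|_{\fB^1_p}^2$ is small, iterating, and counting the number of intervals via \eqref{th1demoeq10}. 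Your direct Gronwall approach would also close, but with slightly looser constants.

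One small inaccuracy: you attribute the restriction $p\in\,]1,6[$ entirely to the uniqueness step. In fact $p<6$ (equivalently $-1+\tfrac3p>-\tfrac12$) is already needed in the existence proof, since the critical Besov product laws used to estimate the quadratic terms and the pressure require $2(-1+\tfrac3p)>-\tfrac3p$. Uniqueness in \cite{PZ2} (and here, via \cite{DM}) is a separate issue.
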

We want to prove here an anisotropic version of the above theorem.
Let us define the anisotropic Besov space that we are going to use.

\begin{defi}\label{def1.2}
{\sl  Let $p$ be in~$[1,+\infty] $, $s_1\leq \frac2p$, $s_2\leq\frac1p$ and
$u$ in~${\mathcal S}_h'(\R^3),$ we set
$$
\|u\|_{\fB^{s_1,s_2}_{p}}\eqdefa\Big(2^{js_1}2^{ks_2}\|\Delta^h_j
\Delta_k^vu\|_{L^{p}}\Big)_{\ell ^{1}}.
$$
The case when $s_1>\frac2p$ or $s_2>\frac1p$ can be similarly
modified as that in Definition \ref{def1.1}.}
\end{defi}

\noindent
{\bf Notations } In all that follows, we shall denote
$$
\fB^0_p \eqdefa \fB^{-1+\frac 2 p,\frac 1 p}_{p}\,,\ \fB^1_p \eqdefa
\fB^{\frac 2 p,\frac 1 p}_{p}\cap\fB^{-1+\frac 2 p,1+\frac 1 p}_{p}
\andf \fB^2_p \eqdefa \fB^{1+\frac 2 p,\frac 1 p}_{p}\cap \fB^{\frac
2 p,1+\frac 1 p}_{p}
$$

Our first result in this paper is as follows:

\begin{thm}
\label{th1}

{\sl Let~$p$ be in~$]3,4[$ and~$r$ in~$[p,6[$. Let us consider an
intial data~$(a_0,u_0)$ in the space~$ \cB_{p}^{\f3p} \times
\fB^0_{p}\cap \cB^{-1+\f3r}_{r}.$
 Then
there exist positive constants $c_0$ and $C_0$ such that if
 \beq
\label{1.6} \eta\eqdefa
\bigl(\mu\|a_0\|_{\cB_{p}^{\f3p}}+\|u_0^h\|_{\fB^0_{p}}\bigr)\exp\Bigl(
\frac {C_0} {\mu^2} \|u_0^3\|_{\fB^0_{p}}^2\Bigr)\leq c_0\mu, \eeq
the system~\eqref{1.3} has a unique global solution
\beq
\label{1.10}
\begin{split}
&a\in\cC_b([0,\infty); \cB_{p}^{\f3p}(\R^3)) \quad\mbox{ and}\quad
u\in\cC_b([0,\infty); \cB^{-1+\f3r}_{r}(\R^3))\cap L^1(\R^+;
\cB^{1+\f3r}_{r}(\R^3)). \end{split} \eeq Moreover, there holds
\beq\label{th1.a}
\begin{split}
&\|u^h\|_{\wt{L}^\infty(\R^+;\fB^0_p)}+\mu\bigl(\|a\|_{\wt{L}^{\infty}(\R^+;\cB_{p}^0)}
+\| u^h\|_{L^1(\R^+;\fB^2_p)}\bigr)\leq C\eta,\\
&\|u^3\|_{\wt{L}^\infty(\R^+;\fB^0_p)}+\mu\|u^3\|_{L^1(\R^+;\fB^2_p)}
\leq 2\|u_0^3\|_{\fB^0_p}+c_2\mu.
\end{split}
 \eeq }

\end{thm}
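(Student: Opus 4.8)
The plan is to prove the uniform a priori bounds \eqref{th1.a} by a continuation argument that exploits the algebraic structure of \eqref{1.3}, then to construct the solution by a Friedrichs-type approximation scheme and pass to the limit with these bounds, and finally to obtain uniqueness from a stability estimate. The decisive observation, already stressed in the introduction, is that the divergence-free condition $\partial_3 u^3=-\dive_h u^h$ converts the self-interaction of the large component into $u^3\partial_3 u^3=-u^3\,\dive_h u^h$, so that the equation for $u^3$ is \emph{linear} in $u^3$ with coefficients built only from the small quantities $u^h$ and $a$.

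I would begin with the density. Since $a$ is transported by the divergence-free field $u$, the transport estimate in the isentropic space gives
\beq
\|a\|_{\wt{L}^\infty_T(\cB^{\f3p}_p)}\lesssim\|a_0\|_{\cB^{\f3p}_p}\exp\Bigl(C\int_0^T\|\nabla u\|_{L^\infty}\,dt\Bigr),
\eeq
the only care being the commutation between $\Delta_q$ and $u\cdot\nabla$ and the control of $\int_0^T\|\nabla u\|_{L^\infty}\,dt$ by the parabolic norms of $u$ recorded in \eqref{th1.a}, through the embeddings attached to $p\in\,]3,4[$.

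Next I would run the coupled velocity estimates on a maximal interval $[0,T[$, introducing the horizontal functional $A(T):=\|u^h\|_{\wt{L}^\infty_T(\fB^0_p)}+\mu\|a\|_{\wt{L}^\infty_T(\cB^0_p)}+\mu\|u^h\|_{L^1_T(\fB^2_p)}$ and the vertical one $B(T):=\|u^3\|_{\wt{L}^\infty_T(\fB^0_p)}+\mu\|u^3\|_{L^1_T(\fB^2_p)}$. For $u^3$, the linear-in-$u^3$ structure together with the anisotropic product laws and the pressure bound below produces a Gronwall factor $\exp\bigl(C\mu^{-1}A(T)\bigr)$, so that $B(T)\le\|u_0^3\|_{\fB^0_p}\exp(Cc_0)+c\mu$ remains below $2\|u_0^3\|_{\fB^0_p}+c_2\mu$ once $\eta\le c_0\mu$. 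For $u^h$ the dangerous term is $u^3\partial_3 u^h$: estimating it by the anisotropic product laws and absorbing its top-order part into the viscous term by Young's inequality generates the Gronwall factor $\exp\bigl(C\mu^{-1}\|u^3\|_{\wt{L}^2_T(\fB^1_p)}^2\bigr)$, and since parabolic smoothing bounds $\|u^3\|_{\wt{L}^2_T(\fB^1_p)}^2$ by $\mu^{-1}\|u_0^3\|_{\fB^0_p}^2$, this factor equals $\exp\bigl(C\mu^{-2}\|u_0^3\|_{\fB^0_p}^2\bigr)$, exactly the exponential in \eqref{1.6}. Hence $A(T)\le C\eta$, and feeding the two inequalities back into one another closes the bootstrap with strictly improved constants, so that $T=+\infty$.

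The step I expect to be the main obstacle is the pressure, which is also where the inhomogeneity genuinely complicates matters. One recovers $\nabla\Pi$ from the variable-coefficient elliptic equation $\dive\bigl((1+a)\nabla\Pi\bigr)=\dive(\cdots)$ by treating $a\nabla\Pi$ perturbatively and inverting the resulting small perturbation of the identity, which is possible precisely because $\mu\|a\|_{\cB^{\f3p}_p}$ is small; here the anisotropic product and commutator estimates are most delicate, and care is needed so that the pressure contributes only terms that are either quadratically small or absorbable into the viscous dissipation. Once \eqref{th1.a} holds, the isotropic regularity \eqref{1.10} is propagated by inserting the anisotropic bounds into the $\cB^{-1+\f3r}_r$ estimate for $u$, using $r\in[p,6[$ to close the nonlinear and pressure terms, and uniqueness follows from a stability argument in a space of one derivative less.
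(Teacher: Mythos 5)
Your overall architecture (bootstrap on the smallness of $\mu\|a\|_{L^\infty_T(\cB^{\f3p}_p)}+\|u^h\|_{L^\infty_T(\fB^0_p)}+\mu\|u^h\|_{L^1_T(\fB^2_p)}$, linearity of the $u^3$ equation in $u^3$ via $u^3\p_3u^3=-u^3\dive_hu^h$, a Gronwall-type mechanism producing the factor $\exp\bigl(C\mu^{-2}\|u_0^3\|_{\fB^0_p}^2\bigr)$, perturbative inversion of $\dive((1+a)\nabla\Pi)=\dive f$) matches the paper's. But there is a genuine gap in your $u^h$ estimate: you single out $u^3\p_3u^h$ as the dangerous term and assert that the pressure contributes ``only terms that are either quadratically small or absorbable into the viscous dissipation.'' This fails for the contribution of $\mu a\D u^3$. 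Since $\nabla\Pi$ is recovered from $\dive((1+a)\nabla\Pi)=\dive(\mu a\D u-u\cdot\nabla u)$, the horizontal source term $(1+a)\nabla_h\Pi$ in the $u^h$ equation is bounded by (among others) $\mu\|a\|_{\cB^{\f3p}_p}\|u^3\|_{\fB^2_p}$, whose time integral is of order $\mu\|a\|_{L^\infty_T(\cB^{\f3p}_p)}\cdot\mu^{-1}\|u_0^3\|_{\fB^0_p}$. This is \emph{linear} in the large quantity $\|u_0^3\|_{\fB^0_p}$, so it is not quadratically small, and it cannot be absorbed by the dissipation of $u^h$ because it carries two derivatives of $u^3$, not of $u^h$. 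Fed into your bootstrap it would force a condition of the type $\|u_0^3\|_{\fB^0_p}\exp\bigl(C\mu^{-2}\|u_0^3\|_{\fB^0_p}^2\bigr)\lesssim\mu$, which defeats the purpose of the theorem.

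The device that resolves this --- and the key idea missing from your proposal --- is to conjugate \emph{both} $a$ and $u^h$ with the weight $\exp\bigl(-\la\int_0^t\|u^3(t')\|_{\fB^2_p}dt'\bigr)$ and to prove the weighted transport estimate of Lemma \ref{prop4.1}, which gives $\la\int_0^T\|u^3(t)\|_{\fB^2_p}\|a_\la(t)\|_{\cB^{\f3p}_p}dt\leq 2\|a_0\|_{\cB^{\f3p}_p}$ for $\la$ large. The offending source then becomes $\mu\int_0^T\|a_\la\|_{\cB^{\f3p}_p}\|u^3\|_{\fB^2_p}dt\lesssim\mu\|a_0\|_{\cB^{\f3p}_p}/\la$, which is of size $\eta$ and closes the horizontal estimate. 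Relatedly, your transport bound for $a$ with the crude factor $\exp\bigl(C\int_0^T\|\nabla u\|_{L^\infty}dt\bigr)$ ignores the separation between $\nabla u^h$ (small in $L^1_t(L^\infty)$) and $\nabla u^3$ (large), which is precisely what the weighted lemma is built to exploit; without it you control $\|a\|$ itself but not the time integral of $\|a\|\,\|u^3\|_{\fB^2_p}$ that actually enters the velocity estimate. The rest of your plan (interval subdivision versus Young absorption for $u^3\p_3u^h$, propagation of the isotropic $\cB^{-1+\f3r}_r$ regularity, uniqueness) is sound and essentially as in the paper.
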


\begin{rmk}\label{rmk1.2}
 {\sl (1)}\  We
emphasize that for any given function $a, \phi$ in the Schwartz
space $\cS(\R^3)$, any~$p$ in~$]3,4[$, Theorem \ref{th1} implies the
global wellposedness of \eqref{1.3} with initial data of the form
\beq\label{rmk1.2a}
\begin{split}
&a_0^\e(x)= (-\ln \e)^\delta \e^{1+\f1p} a(x_1,x_2, \e x_3)\qquad\mbox{and}\\
&u_0^\epsilon=\e_0(-\ln\e)^{\delta}\
\e^{-(1-\f2p)}\sin\Bigl(\frac{x_1}\e\Bigr)
\bigl(0,-\e\p_3\phi,\p_2\phi\bigr)(x_1,x_2, \e x_3),\end{split} \eeq
 for  $0<\delta<\f12$, and $\e,\e_0$ being sufficiently
 small.  Indeed it is well-known that \beno
 \begin{split}
&\quad\Bigl\|\sin\Bigl(\frac{x_1}{\e}\Bigl)\na\phi(x_1,x_2, x_3)\Bigr\|_{\fB^0_p}\leq
C_\phi
\e^{1-\f2p}, \\
& \|a(x_1,x_2,\e x_3)\|_{\cB^{\f3{p}}_{p}}\leq
C\e^{-\f{1}p}\|a\|_{L^p}^{1-\f3p}\|\na a\|_{L^p}^{\f3p},
\end{split} \eeno which ensures that
$$
\bigl(\mu\|a_0^\e\|_{\cB_{p}^{\f3p}}+\|u_0^{\e,h}\|_{\fB^0_{p}}\bigr)\exp\Bigl(
\frac {C_0} {\mu^2} \|u_0^{\e,3}\|_{\fB^0_{p}}^2\Bigr)
\leq
C\e(-\ln\e)^\d\exp\bigl((-\ln\e)^{2\d}\bigr) \rightarrow 0
$$
which tends to~$0$ when~$\e$ tends to~$0$. Hence
Theorem \ref{th1} implies that \eqref{1.3} with initial data
$(a_0^\e, u_0^\e)$ has a unique global solution $(a^\e, u^\e).$

\no{\sl (2)}\ In the case when $\d=0$ in \eqref{rmk1.2a}, the
homogeneity of the initial density $a_0^\e$ could be much larger. In
fact, it follows from the same line as the proof of part (1) that
\eqref{1.3} with the data \beno
\begin{split}
&a_0^\e(x)=\e^{\f1p} a(x_1,x_2, \e x_3)\qquad\mbox{and}\\
&u_0^\epsilon=\e_0 \e^{-(1-\f2p)}\sin\Bigl(\frac{x_1}\e\Bigr)
\bigl(0,-\e\p_3\phi,\p_2\phi\bigr)(x_1,x_2, \e x_3),\end{split}
\eeno also has a unique global solution for $\e_0,
\|a_0\|_{\cB^{\f3p}_p}$ and $\e$ being sufficiently small.
\end{rmk}

Theorem \ref{th1} also ensures the global wellposedness of
\eqref{1.3} with data of the form: \beno \bigl(a_0(x_h,x_3), (\e
u^h_0(x_h,\e x_3), u_0^3(x_h,\e x_3))\bigr)\eeno for any smooth
divergence free vector field $u_0=(u_0^h,u_0^3)$ and with $\e,$
$\|a_0\|_{\cB^{\frac3p}_p},$ for some~$p$ in~$]3,4[$ being
sufficiently small. Notice that the authors \cite{c-g} proved the
global existence of smooth solutions to 3-D classical Navier-Stokes
system for some large data which are slowly varying in one
direction. The main idea behind the proof in \cite{c-g} is that the
solutions to 3-D Navier-Stokes equations slowly varying in one space
variable can be well approximated by solutions of 2-D Navier-Stokse
equation. Yet just as the classical 2-D Navier-Stokes system, 2-D
inhomogeneous Navier-Stokes equations is also globally wellposed
with general initial data (see \cite{danchin2, LS} for instance).
This motivates us to study the global wellposedness of \eqref{1.3}
with large data which are slowly variable in one direction and which
do not satisfy the nonlinear smallness condition \eqref{1.6}.

\begin{thm}\label{thm5}
{\sl Let $\sigma$ be a real number greater than~$1/4$ and $a_0$ a function  of~$
\cB^{\f3p}_p\cap\cB^{-1+\f3q}_{q}$ for some $p$ in~$]3,4[$ and $q$ in~$
]\f32,2[.$ Let $v_0^h=(v_0^1,v_0^2)$ be a horizontal, smooth
divergence free vector field on $\mathbb{R}^3$, belonging, as well
as all its derivatives, to $L^2(\R_{x_3};\dot{H}^{-1}(\R^2)).$
Furthermore, we assume that  for any $\alpha$ in~$\N^3,$
$\pa^\alpha\pa_3v_0^h$ belongs to~Ê$\fB^{-1,\frac12}_{2}(\R^3)$. Then there
exists a positive $\ep_0$ such that if $\ep \leq \ep_0$,
 the initial data
\begin{equation} \label{1.6cz} a_0^{\ep}(x)=\ep^{\sigma} a_0(x_h,\ep x_3), \quad u_0^{\ep}(x)=(v_0^h(x_h,\ep
x_3), 0)
\end{equation}
 generates a unique global solution
$(a^{\ep},u^{\ep})$ of (\ref{1.3}).}
\end{thm}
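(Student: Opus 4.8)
The plan is to follow the two-dimensional reduction of \cite{c-g}: I build an approximate solution of \eqref{1.3} out of a family of two-dimensional profiles indexed by the slow variable, and then show that the genuine solution of \eqref{1.3} differs from it by a remainder that is small once $\ep$ is small. The first step is to freeze a value $y_3\in\R$ of the slow variable and to solve, for each such $y_3$, the two-dimensional inhomogeneous Navier-Stokes system
\begin{gather*}
\pa_t b+v^h\cdot\na_h b=0,\qquad \dv_h v^h=0,\\
\pa_t v^h+v^h\cdot\na_h v^h+(1+b)(\na_h\pi-\mu\Delta_h v^h)=0
\end{gather*}
with data $(\ep^\sigma a_0(\cdot,y_3),v_0^h(\cdot,y_3))$. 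Since the two-dimensional inhomogeneous system is globally wellposed for arbitrary (large) velocity data (see \cite{danchin2,LS}), this yields, for every value of the parameter, a global solution $(b,v^h,\pi)(t,x_h,y_3)$; moreover the density stays $O(\ep^\sigma)$ small for all time by the transport structure. The assumptions that $v_0^h$ and all its derivatives lie in $L^2(\R_{x_3};\dot H^{-1}(\R^2))$ and that $\pa^\alpha\pa_3v_0^h\in\fB^{-1,\f12}_2$, together with $a_0\in\cB^{\f3p}_p\cap\cB^{-1+\f3q}_q$, are precisely what is needed to bound $(b,v^h)$ and its slow-variable derivatives $\pa_{y_3}^k(b,v^h)$ uniformly in $y_3$ and integrably in $t$, uniformly as $\ep\to0$.

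Next I would set $a_{app}^\ep(t,x)=b(t,x_h,\ep x_3)$, $u_{app}^{\ep,h}(t,x)=v^h(t,x_h,\ep x_3)$ and $u_{app}^{\ep,3}=0$. Because $v^h$ is horizontally divergence free, this field is divergence free in three dimensions, and it coincides with the data \eqref{1.6cz} at $t=0$. Inserting it into \eqref{1.3} leaves a residual only in the momentum equations: an $O(\ep)$ term $\ep(1+a_{app}^\ep)\,\pa_{y_3}\pi(t,x_h,\ep x_3)$ in the vertical equation, produced by the slow dependence of the pressure, and an $O(\ep^2)$ term $\mu\ep^2(1+a_{app}^\ep)\,\pa_{y_3}^2v^h(t,x_h,\ep x_3)$ in the horizontal equation, produced by the vertical Laplacian. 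The step-one estimates show that, measured in the anisotropic norms $\fB^0_p$, $\fB^2_p$ of Theorem \ref{th1}, this residual carries a genuine positive power of $\ep$.

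Then I would write $(a^\ep,u^\ep)=(a_{app}^\ep+R^a,u_{app}^\ep+R^u)$ and derive the system for $(R^a,R^u)$: it is \eqref{1.3} linearized about the approximate solution, with zero initial data and with the small residual of the previous step as source term. On this remainder I would run the two-step anisotropic argument underlying Theorem \ref{th1}, estimating first the vertical component $R^{u,3}$, whose equation requires no smallness, and then the horizontal components $R^{u,h}$, whose equation requires smallness of $a$ and of the horizontal velocity. The crucial observation is that although the background $u_{app}^{\ep,h}$ is \emph{large}, it enters the remainder equations only as a coefficient whose norms are controlled by step one, while its vertical gradient $\pa_3u_{app}^{\ep,h}=\ep\,\pa_{y_3}v^h$ is itself $O(\ep)$, and the density contribution $a_{app}^\ep$ is $O(\ep^\sigma)$ with $\sigma>1/4$. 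A continuation argument combined with the corresponding energy inequalities then produces a global remainder that tends to $0$ with $\ep$, whence the global existence and uniqueness asserted for $\ep\leq\ep_0$.

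The hard part is the last step: closing the nonlinear estimates for $(R^a,R^u)$ on the whole of $\R^+$ in the presence of an order-one horizontal background. This forces, on the one hand, sharp step-one control of the profile and of its slow-variable derivatives, uniform in $y_3$ and integrable in $t$, so that the Gronwall factor generated by $\na_h u_{app}^{\ep,h}$ is finite; and, on the other hand, a careful matching of exponents, the ranges $p\in\,]3,4[$, $q\in\,]3/2,2[$ and the threshold $\sigma>1/4$ being exactly what make the $O(\ep)$ smallness of the source and the $O(\ep^\sigma)$ smallness of the density outweigh that Gronwall factor in the spaces $\fB^0_p$ and $\fB^2_p$.
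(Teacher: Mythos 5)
Your broad outline --- a $y_3$-parametrized 2-D profile plus a remainder treated by the anisotropic scheme of Theorem\refer{th1} --- is indeed the paper's strategy, but there is a genuine gap at the point you dismiss as a computation: the claim that the residual, ``measured in the anisotropic norms $\fB^0_p$, carries a genuine positive power of $\ep$.'' For the term $\mu\ep^2(1+a)\,[\pa_{y_3}^2v^h]_\ep$ coming from the vertical Laplacian this is false: one does \emph{not} have an $L^1(\R^+;\fB^0_p)$ bound for $\ep^2[\pa_3^2v^h]_\ep$ (the hypothesis on $v_0^h$ only yields control of $\pa^\al\pa_3 v^h$ in $\fB^{-1,\frac12}_2$-type spaces, one vertical derivative short, and the anisotropic scaling $x_3\mapsto\ep x_3$ does not produce the needed gain). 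The paper has to interpose an extra corrector: writing $F_2^\ep=\ep\pa_3V^h$ with $V^h=(\pa_3v^h,0)(t,x_h,\ep x_3)$, it solves a linear heat/Stokes problem\refeq{remainder-system-2} for $R_1^\ep$ with source $-F_2^\ep$ (which is $O(\ep)$ in the norms of Lemma\refer{le2.0cz}), and only the shifted unknown $w^\ep=R^\ep-R_1^\ep$ satisfies a system whose source $G^\ep$ is integrable in time in $\fB^0_p$. Without this step your remainder estimate cannot be closed as described.

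Two further points. First, you put the density into the 2-D profile by solving the 2-D \emph{inhomogeneous} system for each $y_3$; the paper instead uses the classical 2-D Navier--Stokes profile $(NS2D_3)$ and keeps the whole density in the remainder system. Your variant is not absurd, but the quantitative, uniform-in-$y_3$, $L^1$-in-time bounds of Lemma\refer{le2.0cz} (which drive the whole argument) are standard for classical 2-D Navier--Stokes with $\dot H^{-1}$ data and are not simply ``what the assumptions give'' for the inhomogeneous system; since the density is already destined to be small there is nothing to gain from this complication. Second, the smallness of the density is not $O(\ep^\sigma)$ but $O(\ep^{\sigma-\frac1p})$ in $\cB^{\f3p}_p$ (and $O(\ep^{1+\sigma-\frac1q})$ in $\cB^{-1+\f3q}_q$, which is needed to handle the product $a\,\pa_3^2R_1^\ep$ because $R_1^\ep$ only lives in $L^2$-based spaces): the loss of $\ep^{-1/p}$ under the scaling $a_0(x_h,\ep x_3)$ is precisely where the ranges $p\in\,]3,4[$, $q\in\,]\f32,2[$ and the threshold $\sigma>\f14$ come from, and your sketch does not exhibit this mechanism.
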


\begin{rmk}\label{rmk1.4ag}
(1)\ With $v_0^h$ being  given by Theorem \ref{thm5} and $w_0$ a
smooth divergence free vector field on
 $\mathbb{R}^3,$
I.  Gallagher and the first author proved in\ccite{c-g}  that there exists a positive
$\ep_0$ such that if $0 <\ep \leq \ep_0$, the classical
Navier-Stokes system (which corresponds to $a=0$ in \eqref{1.3})
with the initial data
\begin{equation} \label{cgtrans} u_0^{\ep}(x)=(v_0^h +
\ep w_0^h,w_0^3)(x_h,\ep x_3) \end{equation} has a unique  global
solution.

\no (2)\ G. Gui, J. Huang and and the last author proved in\ccite{GHZ1} similar global
wellposedness result for (\ref{1.3}) with initial data
$a_0^{\ep}(x)=\ep^{\delta_0} a_0(x_h,\ep x_3)$ and initial velocity
given by \eqref{cgtrans} provided that  $a_0 \in W^{1,p}\cap H^2$
for some $p \in (1,2)$ and $\delta_0
>\frac{1}{p}.$ We should point out  that one difficulty in
\cite{GHZ1} is to derive $L^\infty(\R^+; \fB^{1}_p)$ estimate for
the solution $a$ of the free transport equation in \eqref{1.3}.
Toward this, the authors in \cite{GHZ1} assumed more regularities
for $a_0$  and then use an interpolation argument to get this
estimate. The advantage of the argument used in the proof of Theorem
\ref{thm5} is that: as observed from the proof of Theorem \ref{th1},
the isentropic regularities of $a$ is matched with the anisotropic
regularities of $u,$ so that we can still work this problem in the
scaling invariant spaces, which leads to the improvement of the
index $\s>\f12$ in \cite{GHZ1} to be $\s>\f14$ here.

\no (3) It follows from the proof of Theorem \ref{thm5} that we can
prove similar  wellposedness result for \eqref{1.3} with data
$(a_0^\e, u_0^\e)$ given by \eqref{cgtrans} provided that $\e\leq
\e_0$ and $\|a_0^\e\|_{\cB^{\f3p}_p}+\e\|a_0^\e\|_{\cB^{-1+\f3q}_q}$
being sufficiently small and for some $p,q$ satisfying $p$ in~$]3,4[$
and $ q$ in~$]\f32, 2[.$ Nevertheless, as $w_0$ part in
\eqref{cgtrans} satisfies our nonlinear smallness condition
\eqref{1.6}, we choose to investigate the case \eqref{1.6cz} here.
\end{rmk}

The organization of this paper is as follows:

In the second section, we prove some lemmas using Littlewood-Paley theory
 in particular a  lemma of product,  a lemma which explains how to compute the pressure in the case when~$a$ is small in~$\cB_p^{\f3p}$
  and  a lemma of propagation for the transport equation which takes
  into account some anisotropy.

In the third section, we prove Theorem\refer{th1}.

In the forth section, we prove Theorem\refer{thm5}

Let us complete this section by the notations of the paper:

Let $A, B$ be two operators, we denote $[A;B]=AB-BA,$ the commutator
between $A$ and $B$. For $a\lesssim b$, we mean that there is a
uniform constant $C,$ which may be different on different lines,
such that $a\leq Cb$.  We denote by $(a|b)$ the $L^2(\R^3)$ inner
product of $a$ and $b,$ $(d_j)_{j\in\Z}$ (resp.
$(d_{j,k})_{j,k\in\Z^2}$) will be a generic element of $\ell^1(\Z)$
(resp. $\ell^1(\Z^2)$) so that $\sum_{j\in\Z}d_j=1$ (resp.
$\sum_{j,k\in\Z^2}d_{j,k}=1$).

For $X$ a Banach space and $I$ an interval of $\R,$ we denote by
${\cC}(I;\,X)$ the set of continuous functions on $I$ with values in
$X,$  and by ${\mathcal{C}}_b(I;\,X)$ the subset of bounded
functions of ${\mathcal{C}}(I;\,X).$  For $q\in[1,+\infty],$ the
notation $L^q(I;\,X)$ stands for the set of measurable functions on
$I$ with values in $X,$ such that $t\longmapsto\|f(t)\|_{X}$ belongs
to $L^q(I).$

\setcounter{equation}{0}
\section{Some estimates related to Littlewood-Paley analysis}

As we shall frequently use the anisotropic Littlewood-Paley theory,
and in particular aniso\-tropic Bernstein inequalities. For the
convenience of the readers, we first recall the following Bernstein
type lemma from \cite{CZ1, Pa02}:

\begin{lem}\label{lem2.1}
{\sl Let $\cB_{h}$ (resp.~$\cB_{v}$) a ball of~$\R^2_{h}$
(resp.~$\R_{v}$), and~$\cC_{h}$ (resp.~$\cC_{v}$) a ring
of~$\R^2_{h}$ (resp.~$\R_{v}$); let~$1\leq p_2\leq p_1\leq \infty$
and ~$1\leq q_2\leq q_1\leq \infty.$ Then there holds:

\smallbreak\noindent If the support of~$\wh a$ is included
in~$2^k\cB_{h}$, then
\[
\|\partial_{x_h}^\alpha a\|_{L^{p_1}_h(L^{q_1}_v)} \lesssim
2^{k\left(|\al|+2\left(\frac1{p_2}-\frac1{p_1}\right)\right)}
\|a\|_{L^{p_2}_h(L^{q_1}_v)}.
\]
If the support of~$\wh a$ is included in~$2^\ell\cB_{v}$, then
\[
\|\partial_{3}^\beta a\|_{L^{p_1}_h(L^{q_1}_v)} \lesssim
2^{\ell(\beta+(\frac1{q_2}-\frac1{q_1}))} \|
a\|_{L^{p_1}_h(L^{q_2}_v)}.
\]
If the support of~$\wh a$ is included in~$2^k\cC_{h}$, then
\[
\|a\|_{L^{p_1}_h(L^{q_1}_v)} \lesssim 2^{-kN}\sup_{|\al|=N}
\|\partial_{x_h}^\al a\|_{L^{p_1}_h(L^{q_1}_v)}.
\]
If the support of~$\wh a$ is included in~$2^\ell\cC_{v}$, then
\[
\|a\|_{L^{p_1}_h(L^{q_1}_v)} \lesssim 2^{-\ell N} \|\partial_{3}^N
a\|_{L^{p_1}_h(L^{q_1}_v)}.
\]
}
\end{lem}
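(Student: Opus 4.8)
The plan is to reduce each of the four estimates to the statement that a frequency-localized function, or one of its derivatives, is a \emph{partial} convolution (in the horizontal or the vertical variables only) against a rescaled kernel, and then to bound the anisotropic mixed norm by combining Young's convolution inequality in the active variables with Minkowski's integral inequality in the inert ones. Throughout, $\ast_h$ (resp.\ $\ast_v$) denotes convolution in the horizontal (resp.\ vertical) variables, and all implied constants will depend only on the auxiliary cut-offs, on $N$ and on $\al,\beta$, but not on the dyadic parameters $k,\ell$. The first two inequalities are ``direct'' Bernstein estimates built from a cut-off equal to $1$ on the relevant ball; the last two are ``inverse'' estimates that require constructing an explicit inverse multiplier from the homogeneity of $|\xi_h|$ (resp.\ $|\xi_3|$) on the ring.

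For the first inequality I would pick $\phi\in\cD(\R^2)$ equal to $1$ on $\cB_h$, so that, the support of $\wh a$ being in $2^k\cB_h$, one has $\wh a(\xi)=\phi(2^{-k}\xi_h)\wh a(\xi)$. Setting $g_\al\eqdefa\cF_h^{-1}\bigl((i\xi_h)^\al\phi\bigr)$, which lies in every $L^r(\R^2)$, and inverting the horizontal Fourier transform yields
\[
\pa_{x_h}^\al a(x_h,x_3)=2^{k(|\al|+2)}\int_{\R^2}g_\al\bigl(2^k(x_h-y_h)\bigr)\,a(y_h,x_3)\,dy_h .
\]
Taking the $L^{q_1}_v$ norm in $x_3$ and invoking Minkowski's integral inequality pulls that norm inside the $dy_h$ integral, turning the right-hand side into $2^{k(|\al|+2)}\bigl(|g_\al(2^k\cdot)|\ast_h G\bigr)$ with $G(y_h)\eqdefa\|a(y_h,\cdot)\|_{L^{q_1}_v}$. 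Taking then the $L^{p_1}_h$ norm and applying Young's inequality with $1+\f1{p_1}=\f1r+\f1{p_2}$ produces $\|g_\al(2^k\cdot)\|_{L^r_h}=2^{-2k/r}\|g_\al\|_{L^r}$, and since $2-\f2r=2\bigl(\f1{p_2}-\f1{p_1}\bigr)$ the announced power $2^{k(|\al|+2(1/p_2-1/p_1))}$ emerges, while $\|G\|_{L^{p_2}_h}=\|a\|_{L^{p_2}_h(L^{q_1}_v)}$. The second inequality is obtained the same way through a one-dimensional convolution in $x_3$ (Jacobian $2^\ell$ rather than $2^{2k}$); here the inner $L^{q_1}_v$ norm is itself the convolution variable, so Young applies directly for fixed $x_h$ with $1+\f1{q_1}=\f1s+\f1{q_2}$, and $1-\f1s=\f1{q_2}-\f1{q_1}$ gives $2^{\ell(\beta+1/q_2-1/q_1)}$.

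For the third inequality I would use $|\xi_h|^{2N}=\sum_{|\al|=N}\f{N!}{\al!}\,\xi_h^{2\al}$ to write, with $\theta\in\cD(\R^2)$ equal to $1$ on $\cC_h$ and supported in a ring avoiding the origin,
\[
\wh a(\xi)=\sum_{|\al|=N}\f{N!}{\al!}(-i)^N\,\f{\xi_h^\al}{|\xi_h|^{2N}}\,\theta(2^{-k}\xi_h)\,\wh{\pa_{x_h}^\al a}(\xi),
\]
the cut-off being harmless since $\theta(2^{-k}\cdot)=1$ on the support of $\wh a$. Because $\f{\xi_h^\al}{|\xi_h|^{2N}}\theta(2^{-k}\xi_h)=2^{-kN}\Phi_\al(2^{-k}\xi_h)$ with $\Phi_\al(\eta)\eqdefa\f{\eta^\al}{|\eta|^{2N}}\theta(\eta)$ smooth and supported away from $0$, its kernel $K_\al\eqdefa\cF_h^{-1}\Phi_\al$ lies in $L^1(\R^2)$, and one obtains $a=2^{-kN}\sum_{|\al|=N}\f{N!}{\al!}(-i)^N\,2^{2k}K_\al(2^k\cdot)\ast_h\pa_{x_h}^\al a$. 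Since $\|2^{2k}K_\al(2^k\cdot)\|_{L^1_h}=\|K_\al\|_{L^1}$ is scale invariant, the same Minkowski-then-Young argument (now with an $L^1$ kernel, so no extra scaling factor) gives $\|a\|_{L^{p_1}_h(L^{q_1}_v)}\lesssim 2^{-kN}\sup_{|\al|=N}\|\pa_{x_h}^\al a\|_{L^{p_1}_h(L^{q_1}_v)}$. The fourth inequality follows identically from the single multiplier $\theta(2^{-\ell}\xi_3)/(i\xi_3)^N=2^{-\ell N}\Psi(2^{-\ell}\xi_3)$ with $\Psi(\eta_3)\eqdefa\theta(\eta_3)/(i\eta_3)^N$, whose vertical kernel is again $L^1$-normalized after rescaling, the vertical convolution making Minkowski unnecessary.

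The only genuine subtlety, and the point I would treat with most care, is the anisotropy of the mixed norm $\|\cdot\|_{L^{p_1}_h(L^{q_1}_v)}$: since each convolution acts in only one group of variables, Young's inequality cannot be applied to the mixed norm directly. Whenever the convolution is horizontal (the first and third estimates), the inner vertical norm is inert and I must first use Minkowski's integral inequality to move it inside the convolution integral before applying Young in the horizontal variables; for the vertical convolutions (second and fourth) the inner norm coincides with the convolution variable and Young applies at once. The remaining points---that the inverse multipliers $\Phi_\al$ and $\Psi$ are smooth and supported away from the origin, so that their kernels lie in $L^1$ uniformly in the dyadic parameter, together with the bookkeeping of the scaling exponents---are routine.
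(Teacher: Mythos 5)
Your proof is correct; the paper itself gives no proof of this lemma, merely recalling it from \cite{CZ1, Pa02}, and your dilation--convolution argument (a cut-off equal to one on the ball, or an inverse multiplier built from the homogeneity of $|\xi_h|$ and $|\xi_3|$ on the ring, followed by Minkowski in the inert variables and Young in the active ones) is precisely the standard proof found in those references and in \cite{BCD}. The one genuine anisotropic subtlety --- that for horizontal convolutions the inner vertical norm must be pulled inside by Minkowski before Young is applied, whereas for vertical convolutions Young acts directly on the inner norm --- is exactly the point that distinguishes this from the isotropic Bernstein lemma, and you identify and handle it correctly.
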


To consider the product of a distribution in the isentropic Besov
space with a distribution in the anisotropic Besov space, we need
the following result which allows to embed  isotropic Besov spaces
into the anisotropic ones.

 \begin{lem}
 \label{propanisoiso}
{\sl Let~$s$ and~$t$ be positive real numbers. Then for any~$p$ in~$[1,\infty], $ one has
$$
\|f\|_{\fB^{s,t}_{p}} \lesssim \|f\|_{\cB^{s+t}_{p}} \, .
$$
}
\end{lem}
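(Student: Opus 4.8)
The plan is to compare the two norms block by block in the anisotropic Littlewood--Paley decomposition, using the fact that the isotropic block $\Delta_q$ dominates (up to bounded overlap) the two-index anisotropic blocks $\Delta^h_j\Delta^v_k$ whenever the relevant frequency rings are comparable. Concretely, I would start from the definition
$$
\|f\|_{\fB^{s,t}_{p}}=\sum_{j,k}2^{js}2^{kt}\|\Delta^h_j\Delta^v_k f\|_{L^p},
$$
and bound $\|\Delta^h_j\Delta^v_k f\|_{L^p}$ in terms of the isotropic blocks $\|\Delta_q f\|_{L^p}$. The key geometric observation is that on the support of $\varphi(2^{-j}|\xi_h|)\varphi(2^{-k}|\xi_3|)$ the full frequency $|\xi|$ is of size $2^{\max(j,k)}$ (equivalently $2^j+2^k$), so $\Delta^h_j\Delta^v_k f$ sees only those isotropic annuli $\Delta_q f$ with $2^q\sim 2^{\max(j,k)}$. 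Thus only finitely many $q$ contribute for each pair $(j,k)$, and I can write $\Delta^h_j\Delta^v_k f=\Delta^h_j\Delta^v_k \widetilde{S}f$ where $\widetilde{S}$ is localized at frequencies $2^q\sim 2^{\max(j,k)}$; since $\Delta^h_j$ and $\Delta^v_k$ are uniformly $L^p$-bounded Fourier multipliers (a standard consequence of the Bernstein-type estimates in Lemma \ref{lem2.1}, or simply of $\|\varphi(2^{-j}D)\|_{L^p\to L^p}\lesssim 1$), this gives $\|\Delta^h_j\Delta^v_k f\|_{L^p}\lesssim \sum_{2^q\sim 2^{\max(j,k)}}\|\Delta_q f\|_{L^p}$.

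Inserting this into the sum and using positivity of $s$ and $t$, I would estimate
$$
\|f\|_{\fB^{s,t}_{p}}\lesssim \sum_{j,k}2^{js}2^{kt}\!\!\sum_{2^q\sim 2^{\max(j,k)}}\!\!\|\Delta_q f\|_{L^p}
\lesssim \sum_{q}\|\Delta_q f\|_{L^p}\!\!\sum_{\max(j,k)\sim q}\!\!2^{js}2^{kt}.
$$
The inner sum over pairs $(j,k)$ with $\max(j,k)\sim q$ is the crux of the argument: splitting into the regime $j\le k$ (where $k\sim q$ and $\sum_{j\le k}2^{js}\lesssim 2^{ks}$ by $s>0$) and $j\ge k$ (symmetrically, using $t>0$), each geometric series converges precisely because $s,t>0$, and the total is bounded by $2^{q(s+t)}$. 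This yields $\|f\|_{\fB^{s,t}_{p}}\lesssim \sum_q 2^{q(s+t)}\|\Delta_q f\|_{L^p}=\|f\|_{\cB^{s+t}_{p}}$, which is the claim.

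The main obstacle — and the only place where positivity is essential — is the summability of the inner double series; if $s$ or $t$ were nonpositive the geometric sums $\sum_{j\le k}2^{js}$ or $\sum_{k\le j}2^{kt}$ would fail to converge to something controlled by $2^{\max(j,k)\cdot(\cdot)}$, and the embedding would break. Everything else is bookkeeping: the frequency-support localization restricting $q$ to $2^q\sim 2^{\max(j,k)}$, and the uniform $L^p$ bounds on the anisotropic dyadic projectors, both of which follow from the construction of the decomposition and the Bernstein estimates already recorded. I would also need to check that $f\in\cS_h'$ so that the low-frequency parts vanish and the sums genuinely represent $f$, but this is inherited directly from membership in $\cB^{s+t}_p$.
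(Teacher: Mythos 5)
Your proposal is correct and follows essentially the same route as the paper's proof: both rest on the observation that $\Delta^h_j\Delta^v_kf$ only sees isotropic blocks $\Delta_q f$ with $q$ comparable to $\max(j,k)$, followed by summing the geometric series in the smaller index using the positivity of $s$ and $t$. The only cosmetic difference is that the paper splits into the regimes $k<j$ and $k\geq j$ before invoking the localization, whereas you localize first and then split the inner sum.
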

\begin{proof}
Thanks to Definition \ref{def1.2}, one has
$$
\|f\|_{\fB^{s,t}_{p}} = \sum_{j,k\in\Z^2} 2^{js} 2^{kt} \|
\Delta_j^h\Delta_k^v f\|_{L^p} \, .
$$
We separate the above sum into two parts, depending on whether~$k <
j$ or~$ k \geq j$ and we shall only detail the first case (the
second one is identical). We  notice  that if~$k<j$,
$$
\begin{aligned}
 \| \Delta_j^h\Delta_k^v f \|_{L^p}\leq  \sum_{\ell\in\Z}   \|\Delta_\ell \Delta_j^h\Delta_k^v f \|_{L^p}\lesssim
 \sum_{|\ell-j|\leq N_0}\| \Delta_\ell f \|_{L^p}\, .
\end{aligned}
$$
Then we infer from the fact that $t>0$
$$
\begin{aligned}
\sum_{\substack{j\in\Z\\k < j}} 2^{js} 2^{kt} \|
\Delta_j^h\Delta_k^v f\|_{L^p}
&\lesssim \sum_{\substack{j,\ell\in\Z^2\\|j-\ell|\leq N_0}}2^{js}\| \Delta_\ell   f\|_{L^p} \sum_{k< j} 2^{kt}  \\
&\lesssim  \sum_{j\in\Z} 2^{j(s+t)} \| \Delta_j   f\|_{L^p}\lesssim
\|f\|_{\cB^{s+t}_p}.
\end{aligned}
$$
And the result follows.
\end{proof}

In  order to obtain a better description of the regularizing effect
of the transport-diffusion equation, we will use Chemin-Lerner type
spaces $\widetilde{L}^{\lambda}_T(B^s_{p,r}(\R^3))$ (see \cite{BCD}
for instance).

To study product laws between distributions in the anisotropic Besov
spaces, we need to  modify the isotropic para-differential
decomposition of  Bony \cite{Bo} to the setting of anisotropic
version. We first recall the isotropic para-differential
decomposition from \cite{Bo}: let $a$ and~Ê$b$ be in~$ \cS'(\R^3)$,
\beq
\label{pd}\begin{split} &ab=T(a,b)+\cR(a,b), \quad\mbox{or}\quad
ab=T(a,b)+\bar{T}(a,b)+ R(a,b), \quad\hbox{where}\\
& T(a,b)=\sum_{j\in\Z}S_{j-1}a\Delta_jb, \quad
\bar{T}(a,b)=T(b,a),\quad
 \cR(a,b)=\sum_{j\in\Z}\Delta_jaS_{j+2}b, \andf\\
&R(a,b)=\sum_{j\in\Z}\Delta_ja\tilde{\Delta}_{j}b,\quad\hbox{with}\quad
\tilde{\Delta}_{j}b=\sum_{\ell=j-1}^{j+1}\D_\ell a. \end{split} \eeq
In what follows, we shall also use the  anisotropic version of
Bony's decomposition for both horizontal and vertical variables.

As an application of the above basic facts on Littlewood-Paley
theory, we present the following product laws in the anisotropic
Besov spaces.

\begin{lem}
\label{lem2.2}
{\sl Let $p\geq q\geq 1$ with $\f1p+\f1q\leq 1,$ and $s_1\leq
\f2{q},$ $ s_2\leq \f2p$ with $s_1+s_2>0.$ Let $\sigma_1\leq
\f1{q},$ $ \sigma_2\leq \f1{p}$ with $\sigma_1+\sigma_2>0$. Then for
$a$ in~$ \fB^{s_1,\sigma_1}_{q}(\R^3)$ and~$b$ in~$
\fB^{s_2,\sigma_2}_{p}(\R^3)$, the product~$ab$ belongs to~$
\fB^{s_1+s_2-\f2{q},\sigma_1+\sigma_2-\f{1}{q}}_{p}(\R^3),$ and
\beno \|a
b\|_{\fB^{s_1+s_2-\f2{q},\sigma_1+\sigma_2-\f{1}{q}}_{p}}\lesssim
\|a\|_{\fB^{s_1,\sigma_1}_{q}}\|b\|_{\fB^{s_2,\sigma_2}_{p}}. \eeno}
\end{lem}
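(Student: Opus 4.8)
The plan is to prove the anisotropic product law by expanding $ab$ using Bony's decomposition in \emph{both} the horizontal and vertical variables. Writing $ab = \sum T^h,\bar T^h, R^h$ in the horizontal variable and decomposing each term further in the vertical variable, I obtain a finite collection of paraproduct-type pieces of the generic forms $S^h_{j-1}S^v_{k-1}a\,\Delta^h_j\Delta^v_k b$ (and its symmetric partner with $a,b$ swapped) together with remainder pieces where the two horizontal (resp.\ vertical) frequencies are comparable and summed. For each such piece I estimate the $L^p$ norm of $\Delta^h_{j'}\Delta^v_{k'}(ab)$ localized at frequency $(2^{j'},2^{k'})$, extract the factors $2^{j's_*}2^{k'\sigma_*}$ required by the target space $\fB^{s_1+s_2-\f2q,\sigma_1+\sigma_2-\f1q}_p$, and sum in $\ell^1(\Z^2)$.

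The key quantitative input is Lemma \ref{lem2.1}: to place the low-frequency factor $S^h a$ or $S^v a$ (which lives at $L^q$) against the high-frequency factor $\Delta b$ (at $L^p$) inside an $L^p$-norm, I use H\"older in the horizontal and vertical variables, which forces the constraint $\f1p+\f1q\le 1$ stated in the hypothesis, and then convert the gained $L^q_h(L^q_v)$ norm of the low-frequency piece into an $L^p_h(L^p_v)$ bound via the anisotropic Bernstein inequalities. Summing a low-frequency block $S^h_{j-1}S^v_{k-1}a$ produces precisely the loss of $\f2q$ horizontal derivatives and $\f1q$ vertical derivatives — this is exactly the shift appearing in the index of the target space. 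The conditions $s_1\le\f2q$, $s_2\le\f2p$ with $s_1+s_2>0$ (and their vertical analogues) guarantee respectively that the geometric series over the low frequencies converge and that the diagonal remainder sums converge; for the remainder terms one invokes the positivity $s_1+s_2>0$, $\sigma_1+\sigma_2>0$ to sum the two comparable high frequencies, at the cost of the same derivative shift via Bernstein applied after summation. Throughout, the frequency-localization supports combine so that each output piece is supported in a fixed dilate of a ring (or a ball, for the paraproduct terms) in each variable, which is what permits reindexing by generic $\ell^1$ sequences $(d_{j,k})$.

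I would organize the bookkeeping by treating the four structural cases — horizontal-paraproduct, horizontal-remainder, and within each a further vertical split — and in each case writing the localized factor as a generic $\ell^1(\Z^2)$ element $d_{j,k}\|a\|_{\fB^{s_1,\sigma_1}_q}\|b\|_{\fB^{s_2,\sigma_2}_p}2^{-j(s_1+s_2-\f2q)}2^{-k(\sigma_1+\sigma_2-\f1q)}$, so that after the final $\ell^1$ summation the product of the two input norms emerges cleanly. The symmetric paraproduct term $\bar T$ is handled identically to $T$ after exchanging the roles of the two functions (this is why one only needs to detail one of them, as in the proof of Lemma \ref{propanisoiso}).

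The main obstacle, I expect, will be the genuinely \emph{anisotropic} bookkeeping rather than any single hard inequality: because the horizontal and vertical Bony decompositions are performed independently, each structural term subdivides into several sub-sums according to which variable is in ``paraproduct'' versus ``remainder'' configuration, and one must check in every one of these mixed cases that the two scaling exponents are produced correctly and that \emph{both} summability conditions (the horizontal $s_1+s_2>0$, $s_i\le\f2{(\cdot)}$ and the vertical $\sigma_1+\sigma_2>0$, $\sigma_i\le\f1{(\cdot)}$) are actually used and are exactly what is needed. The subtle point is the loss of $\f1q$ (not $\f1p$) vertical derivatives: it arises because the low-frequency vertical block $S^v_{k-1}a$ carries an $L^q_v$ factor converted to $L^p_v$ by vertical Bernstein, and one must verify this asymmetry is consistent in the remainder terms as well, where the vertical Bernstein step is applied \emph{after} summing the comparable frequencies.
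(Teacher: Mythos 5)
Your plan coincides with the paper's proof: a double Bony decomposition in the horizontal and vertical variables, the anisotropic Bernstein inequalities of Lemma\refer{lem2.1} combined with H\"older (whence the hypothesis $\f1p+\f1q\le 1$) to pass from $L^q$ to $L^p$ at the cost of exactly $\f2q$ horizontal and $\f1q$ vertical derivatives, the low-frequency sums converging thanks to $s_1\le\f2q$, $\sigma_1\le\f1q$ and the remainder sums thanks to $s_1+s_2>0$, $\sigma_1+\sigma_2>0$, all packaged with generic $\ell^1(\Z^2)$ sequences $d_{j,k}$. The only cosmetic slip is that the spectrum of a paraproduct block lies in a ring and that of a remainder block in a ball, not the other way around; this does not affect the argument.
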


\begin{proof}  We first get by applying Bony's decomposition \eqref{pd} in both
horizontal and vertical variables that \beq\label{le2.3a}
\begin{split}
ab=&(T^h+\bar{T}^h+R^h)(T^v+\bar{T}^v+R^v)(a,b)\\
=&T^hT^v(a,b)+T^h\bar{T}^v(a,b)+T^hR^v(a,b)+\bar{T}^hT^v(a,b)\\
&+\bar{T}^h\bar{T}^v(a,b)+\bar{T}^hR^v(a,b)+R^hT^v(a,b)+R^h\bar{T}^v(a,b)+R^hR^v(a,b).
\end{split}
\eeq In what follows, we shall  detail the estimates to some typical
terms above, the other cases can be followed along the same line.
Note that $\s_1+\s_2>0,$ we get, by applying Lemma \ref{lem2.1},
that \beno
\begin{split}
\|\D_j^h\D_k^v(T^hR^v(a,b))\|_{L^{p}}\lesssim&
2^{\frac{k}{q}}\sum_{\substack{|j'-j|\leq 4\\ k'\geq
k-N_0}}\|S_{j'-1}^h\D_{k'}^va\|_{L^\infty_h(L^{q}_v)}\|\D_{j'}^h\wt{\D}_{k'}^vb\|_{L^{p}}\\
\lesssim& 2^{\frac{k}{q}}\sum_{\substack{|j'-j|\leq 4\\ k'\geq
k-N_0}}d_{j',k'}2^{-j'(s_1+s_2-\f2q)}2^{-k'(\s_1+\s_2)}\|a\|_{\fB^{s_1,\sigma_1}_{q}}\|b\|_{\fB^{s_2,\sigma_2}_{p}}\\
 \lesssim&
d_{j,k}2^{-j(s_1+s_2-\frac2{q})}2^{-k(\s_1+\s_2-\frac1{q})}\|a\|_{\fB^{s_1,\sigma_1}_{q}}\|b\|_{\fB^{s_2,\sigma_2}_{p}}
.
\end{split}
\eeno The same estimate holds for $T^hT^v(a,b)$ and
$T^h\bar{T}^v(a,b).$

Along the same lines, we obtain
 \beno
\begin{split}
\|\D_j^h\D_k^v(\bar{T}^hR^v(a,b))\|_{L^{p}}\lesssim&2^{2j(\f1q-\f1p)}
2^{\frac{k}{q}}\!\!\sum_{\substack{|j'-j|\leq 4\\ k'\geq
k-N_0}}\|\D_{j'}^h\D_{k'}^va\|_{L^{q}}\|S_{j-1}^h\wt{\D}^v_{k'}b\|_{L^\infty_h(L^{p}_v)}\\
\lesssim&2^{2j(\f1q-\f1p)} 2^{\frac{k}{q}}\!\!\!\!\!\sum_{\substack{|j'-j|\leq
4\\ k'\geq k-N_0}}\!\!\!d_{j',k'}2^{-j'(s_1+s_2-\f2p)}2^{-k'(\s_1+\s_2)}\|a\|_{\fB^{s_1,\sigma_1}_{q}}\|b\|_{\fB^{s_2,\sigma_2}_{p}}\\
\lesssim& d_{j,k}
2^{-j(s_1+s_2-\frac2{q})}2^{-k(\s_1+\s_2-\frac1{q})}\|a\|_{\fB^{s_1,\sigma_1}_{q}}\|b\|_{\fB^{s_2,\sigma_2}_{p}}.
\end{split}
\eeno The same estimate holds for $\bar{T}^hT^v(a,b)$ and
$\bar{T}^h\bar{T}^v(a,b).$
Finally applying Lemma \ref{lem2.1} once again and using the fact
that $s_1+s_2>0, $ $\s_1+\s_2>0,$ gives rise to
\beno
\|\D_j^h\D_k^v(R^hR^v(a,b))\|_{L^{p}} &\lesssim &2^{\frac{2j}{q}}
2^{\frac{k}{q}}\sum_{\substack{j'\geq j-N_0\\
 k'\geq k-N_0}}\|\D_{j'}^h\D_{k'}^va\|_{L^{q}}\|\wt{\D}_{j'}^h\wt{\D}^v_{k'}b\|_{L^{p}}\\
& \lesssim &
2^{\frac{2j}{q}} 2^{\frac{k}{q}}\sum_{\substack{j'\geq
j-N_0\\ k'\geq k-N_0}}d_{j',k'}2^{-j'(s_1+s_2)}2^{-k'(\s_1+\s_2)}\|a\|_{\fB^{s_1,\sigma_1}_{q}}\|b\|_{\fB^{s_2,\sigma_2}_{p}}\\
&\lesssim& d_{j,k}
2^{-j(s_1+s_2-\frac2{q})}2^{-k(\s_1+\s_2-\frac1{q})}\|a\|_{\fB^{s_1,\sigma_1}_{q}}\|b\|_{\fB^{s_2,\sigma_2}_{p}}.
\eeno The same estimate holds for $R^hT^v(a,b)$ and
$R^h\bar{T}^v(a,b).$ This together with \eqref{le2.3a} completes the
proof of Lemma \ref{lem2.2}.
\end{proof}

\medbreak As an application of the laws of product, we state a lemma
which will  describe the way how to compute the pressure in the case
when~$a$ is small.
\begin{lem}
\label{lemmapressureinhomo} {\sl Let $p\in(1,4),$ we consider a
function~$a$ such that~$\|a\|_{\cB^{\f3p}_p}$ is small enough.
If~$\Pi$ satisfies
$$
(D) \qquad \dive((1+a) \nabla \Pi -f)=0
$$
with~$f$ in~$\fB_p^0$, then~$(D)$ has a unique solution which
satisfies
$$
\|\na\Pi\|_{\fB_p^0} \lesssim \|f\|_{\fB_p^0} \quad\hbox{and
thus}\quad\|(1+a)\na \Pi\|_{\fB_p^0} \lesssim \|f\|_{\fB_p^0}.
$$
}
\end{lem}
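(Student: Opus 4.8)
The plan is to solve the elliptic equation $(D)$ by a fixed-point/Neumann-series argument, treating the term $a\nabla\Pi$ as a small perturbation of the constant-coefficient Laplace equation. First I would rewrite $(D)$ as $\Delta\Pi = \dive f - \dive(a\nabla\Pi)$, so that formally $\nabla\Pi = \nabla\Delta^{-1}\dive f - \nabla\Delta^{-1}\dive(a\nabla\Pi)$. Introducing the operator $\mathcal{L}g \eqdef \nabla\Delta^{-1}\dive(a\,g)$ acting on vector fields $g$, the problem becomes $(\Id+\mathcal{L})\nabla\Pi = \nabla\Delta^{-1}\dive f$. The key point is that $\nabla\Delta^{-1}\dive$ is a matrix of zeroth-order Fourier multipliers (compositions of Riesz transforms), hence bounded on the anisotropic space $\fB^0_p$; this reduces everything to controlling the multiplication operator $g\mapsto a g$ on $\fB^0_p$.

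The heart of the argument is therefore the estimate $\|a g\|_{\fB^0_p}\lesssim \|a\|_{\cB^{\f3p}_p}\|g\|_{\fB^0_p}$, which I would obtain by combining Lemma \ref{propanisoiso} with the product law Lemma \ref{lem2.2}. Recalling that $\fB^0_p = \fB^{-1+\f2p,\f1p}_p$, I would first embed $a$ from the isotropic space $\cB^{\f3p}_p$ into a suitable anisotropic space via Lemma \ref{propanisoiso}, writing $\cB^{\f3p}_p \hookrightarrow \fB^{s_1,\sigma_1}_p$ for an appropriate splitting $s_1+\sigma_1 = \f3p$ with $s_1\le\f2p$ and $\sigma_1\le\f1p$ both positive (this is where $p>3$, equivalently $\f3p<2$, enters). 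Then Lemma \ref{lem2.2} with $q=p$ gives $\fB^{s_1,\sigma_1}_p\cdot\fB^{-1+\f2p,\f1p}_p \hookrightarrow \fB^0_p$ once the index bookkeeping $s_1+(-1+\f2p)-\f2p = -1+\f2p$ and $\sigma_1+\f1p-\f1p = \f1p$ is arranged and the positivity conditions $s_1+s_2>0$, $\sigma_1+\sigma_2>0$ are checked. The upshot is an operator-norm bound $\|\mathcal{L}\|_{\mathcal{L}(\fB^0_p)}\lesssim \|a\|_{\cB^{\f3p}_p}$.

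With the smallness hypothesis $\|a\|_{\cB^{\f3p}_p}$ small enough, one then has $\|\mathcal{L}\|_{\mathcal{L}(\fB^0_p)}\le\f12$, so $\Id+\mathcal{L}$ is invertible on $\fB^0_p$ by the Neumann series, giving both existence and uniqueness of $\nabla\Pi$ together with the estimate
$$
\|\nabla\Pi\|_{\fB^0_p}\leq \f{1}{1-\|\mathcal{L}\|}\|\nabla\Delta^{-1}\dive f\|_{\fB^0_p}\lesssim \|f\|_{\fB^0_p}.
$$
The second assertion follows immediately, since $\|(1+a)\nabla\Pi\|_{\fB^0_p}\lesssim \|\nabla\Pi\|_{\fB^0_p}+\|a\nabla\Pi\|_{\fB^0_p}\lesssim (1+\|a\|_{\cB^{\f3p}_p})\|\nabla\Pi\|_{\fB^0_p}\lesssim \|f\|_{\fB^0_p}$, again using the product estimate.

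The main obstacle I anticipate is purely the index arithmetic in applying Lemma \ref{lem2.2}: I must choose the embedding exponents $(s_1,\sigma_1)$ for $a$ so that the product with $g\in\fB^0_p$ lands back in exactly $\fB^0_p$ (no regularity loss), while simultaneously satisfying all four constraints $s_1\le\f2p$, $\sigma_1\le\f1p$, $s_1+s_2>0$, $\sigma_1+\sigma_2>0$. This is precisely what forces the range $p\in(1,4)$ in the hypothesis, and verifying that these constraints are compatible (rather than the boundedness of the Riesz-type multiplier, which is standard) is the one step requiring genuine care.
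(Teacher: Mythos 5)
Your proposal is correct and follows essentially the same route as the paper: rewrite $(D)$ as $\Delta\Pi=\dive f-\dive(a\nabla\Pi)$, invert $\Id$ plus the operator $g\mapsto\nabla\Delta^{-1}\dive(ag)$ by a Neumann series, and bound its norm on $\fB^0_p$ by $\|a\|_{\cB^{\f3p}_p}$ via Lemma\refer{propanisoiso} and the product law of Lemma\refer{lem2.2}, the condition $p<4$ entering exactly where you place it (the positivity $s_1+s_2=\f4p-1>0$). The only quibble is your parenthetical that positivity of the split exponents ``is where $p>3$ enters'' --- with $s_1=\f2p$, $\sigma_1=\f1p$ these are positive for all $p$, and the lemma indeed only needs $p<4$.
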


\begin{proof} We first write ~$(D)$ as
$$
\Delta \Pi = -\dive (a\nabla \Pi) + \dive f.
$$
Applying now the operator~$\nabla \Delta^{-1}$  to this identity implies that
$$
\nabla \Pi = -\cM_a (\nabla \Pi) +\nabla \Delta^{-1} \dive f\with
-\cM_a (g)\eqdefa \nabla \Delta^{-1} \dive (ag).
$$
Laws of product from Lemma\refer{lem2.2} together with
Lemma\refer{propanisoiso} implies that~$\|\cM_a\|_{\cL(\fB^0_p)}
\lesssim \|a\|_{\cB^{\f3p}_p}$ because~$p<4$. Thus,
if~$\|a\|_{\cB^{\f3p}_p}$ is small enough, the
operator~$(\Id-\cM_a)^{-1}$ is well defined as an element
of~$\cL(\fB^0_p)$ by the formula
$$
(\Id-\cM_a)^{-1} = \sum_{k=0}^\infty \cM_a^k.
$$
As~$\nabla\D^{-1} \dive$ is a homogenenous Fourier multiplier of degree~$0$, the lemma is proved.
\end{proof}

Now, we are going the prove a lemma which is a variation about the classical propagation lemma for regularity of index less than~$1$.
\begin{lem}
\label{prop4.1} {\sl Let $a_0$ be in~$\cB_{p}^{\f3p}(\R^3),$ and
$u=(u^h,u^3)$ be a divergence free vector field such that~$\nabla u$ belongs to~$
L^1([0,T], L^\infty(\R^3)).$ Let $f$ be in~$L^1([0,T])$ with $\|\na
u^3(t)\|_{L^\infty}\leq C f(t)$ for all $t$ in~$[0,T].$ We denote
$$
 a_\la\eqdefa
a\exp\Bigl(-\la\int_0^tf(t')\,dt'\Bigr).
$$
 Then, the unique solution~$a$ of
 \beq
  \label{4.1} \p_ta+u\cdot\na a=0,\qquad
a|_{t=0}=a_0
\eeq
 satisfies, for any $t$in~$[0,T]$ and $\la$ large enough,

\beq \label{4.2}
\|a_\la\|_{\wt{L}^{\infty}_t(\cB_{p}^{\f3p})}+\f{\la}2 \int_0^t
f(t') \|a_\la(t') \|_{\cB_{p}^{\f3p}}dt'\leq
\|a_0\|_{\cB_{p}^{\f3p}}+C\|a_\lambda\|_{\wt{L}_t^\infty(\cB_{p}^{\f3p})}
\int_0^t \|\nabla u^h(t') \|_{L^\infty}dt' . \eeq }
\end{lem}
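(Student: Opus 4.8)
The plan is to localize \eqref{4.1} in frequency and to run an $L^p$ energy estimate on the dyadic blocks of the weighted unknown $a_\la$, the whole point being that the weight $\exp(-\la\int_0^tf)$ produces a damping term which will swallow the vertical part of the transport while leaving the horizontal part on the right-hand side. We are in the favourable regime $\f3p\in\,]0,1[$ (since $p>3$), so that the regularity index stays below $1$ and a commutator argument propagates it. First I would apply $\D_j$ to \eqref{4.1} and set $R_j\eqdefa[u\cdot\na\,;\D_j]a=u\cdot\na\D_ja-\D_j(u\cdot\na a)$. A direct computation using $a_\la=a\exp(-\la\int_0^tf\,dt')$ shows that
\beno
\p_t\D_ja_\la+u\cdot\na\D_ja_\la+\la f\,\D_ja_\la=\exp\Bigl(-\la\int_0^tf(t')\,dt'\Bigr)R_j.
\eeno
Since $\dive u=0$, testing against $|\D_ja_\la|^{p-2}\D_ja_\la$ kills the transport term, and an $L^p$ energy estimate integrated in time yields, for every $j$,
\beno
\|\D_ja_\la(t)\|_{L^p}+\la\int_0^tf\,\|\D_ja_\la\|_{L^p}\,dt'\leq\|\D_ja_0\|_{L^p}+\int_0^t\exp\Bigl(-\la\int_0^{t'}f\Bigr)\|R_j\|_{L^p}\,dt'.
\eeno

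The heart of the matter is the commutator, which I would split along the anisotropic directions as $R_j=[u^h\cdot\na_h\,;\D_j]a+[u^3\p_3\,;\D_j]a$. Applying Bony's decomposition \eqref{pd} to each piece, every paraproduct and remainder term is bounded by the $L^\infty$ norm of the relevant velocity gradient times $\|a\|_{\cB^{\f3p}_p}$, with an $\ell^1(\Z)$ prefactor $(d_j)$; the constraint $\f3p<1$ is exactly what prevents a derivative from having to fall on $a$, and the divergence-free assumption is used only in the energy step, so that the two components decouple freely:
\beno
2^{j\f3p}\|[u^h\cdot\na_h\,;\D_j]a\|_{L^p}\lesssim d_j\|\na u^h\|_{L^\infty}\|a\|_{\cB^{\f3p}_p}\andf 2^{j\f3p}\|[u^3\p_3\,;\D_j]a\|_{L^p}\lesssim d_j\|\na u^3\|_{L^\infty}\|a\|_{\cB^{\f3p}_p}.
\eeno

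Finally I would assemble the estimate. Noting $\exp(-\la\int_0^{t'}f)\|a(t')\|_{\cB^{\f3p}_p}=\|a_\la(t')\|_{\cB^{\f3p}_p}$, I multiply the block inequality by $2^{j\f3p}$, insert the two commutator bounds and sum over $j\in\Z$ (using $\sum_jd_j=1$), which gives
\beno
\|a_\la\|_{\wt L^\infty_t(\cB^{\f3p}_p)}+\la\int_0^tf\,\|a_\la\|_{\cB^{\f3p}_p}\,dt'\leq\|a_0\|_{\cB^{\f3p}_p}+C\int_0^t\bigl(\|\na u^h\|_{L^\infty}+\|\na u^3\|_{L^\infty}\bigr)\|a_\la\|_{\cB^{\f3p}_p}\,dt'.
\eeno
Using the hypothesis $\|\na u^3(t)\|_{L^\infty}\leq Cf(t)$, the quantity $\int_0^tf\|a_\la\|_{\cB^{\f3p}_p}$ occurs on both sides, so taking $\la$ large enough absorbs the right-hand occurrence into the damping term with its coefficient halved; bounding $\|a_\la(t')\|_{\cB^{\f3p}_p}\leq\|a_\la\|_{\wt L^\infty_t(\cB^{\f3p}_p)}$ in the remaining horizontal integral then produces \eqref{4.2}. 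The step I expect to be the main obstacle is the commutator estimate above: one has to check, term by term in Bony's decomposition and in the range $s=\f3p<1$, that each contribution is controlled by $\|\na u\|_{L^\infty}\|a\|_{\cB^{\f3p}_p}$ with a summable prefactor, so that the vertical gradient genuinely splits off and can be traded against $f$; the precise bookkeeping of the Chemin--Lerner norm together with the damping term also deserves some care.
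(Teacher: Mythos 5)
Your proposal is correct and follows essentially the same route as the paper: apply $\D_j$ to the weighted equation, run an $L^p$ energy estimate using $\dive u=0$ to kill the transport term, estimate the commutator via Bony's decomposition (the paraproduct part by the first-order commutator bound, the remainder part by Bernstein using $0<\f3p<1$), split the velocity gradient into horizontal and vertical contributions, and absorb the vertical one into the damping term for $\la$ large. The only cosmetic difference is that you split the commutator by direction before applying Bony's decomposition, whereas the paper decomposes $u\cdot\na a_\la=T(u,\na a_\la)+\cR(u,\na a_\la)$ first and then separates $\na u^h$ from $\na u^3$ inside each piece; the estimates are identical.
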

\begin{proof}  The proof of this lemma basically follows from that of Proposition 3.1
in \cite{PZ2}. The novelty of our observation here is that the
$L^1_T(Lip(\R^3))$ estimate of the convection velocity enables us to
propagate the $\cB^{\frac3p}_p$ regularity for \eqref{4.1} when
$p>3.$

 As both the existence and uniqueness of solutions to \eqref{4.1}
essentially follows from the estimate (\ref{4.2}) for some
appropriate approximate solutions to \eqref{4.1}. For simplicity,
here we just present the {\it a priori} estimate \eqref{4.2} for
smooth enough solutions of \eqref{4.1}. In this case, thanks to
\eqref{4.1}, we have \beno \p_ta_\la+\la f(t)a_\la+u\cdot\na
a_\la=0. \eeno Applying $\D_j$ to the above equation and then taking
$L^2$ inner product of the resulting equation with $|\D_j
a_\lam|^{p-2}\D_j a_\lam$, we obtain
\beq
\label{4.3}
 \f1q\f{d}{dt}\|\D_j
a_\la(t)\|_{L^p}^p+\la f(t)\|\D_j a_\la(t)\|_{L^p}^p+\bigl(\D_j
(u\cdot\na a_\la)\ |\ |\D_j a_\lam|^{p-2}\D_ja_\lam\bigr)=0.
\eeq
While as
$\dv u=0,$ we get, by using Bony's decomposition \eqref{pd}, \beno
u\cdot\na a_\la=T(u, \na a_\la)+\cR(u,\na a_\la),\eeno and  a
standard commutator's argument, that  \beno
\begin{split} \bigl(\D_j(T(u,\na a_\la))\ |\ |\D_j
a|^{p-2}\D_ja\bigr)=&\sum_{|j'-j|\leq 5}\Bigl(\bigl([\D_j;
S_{j'-1}u]\D_{j'}\na a_\la \ |\ |\D_j
a_\la|^{p-2}\D_ja_\la\bigr)\\
&+\bigl((S_{j'-1}u-S_{j-1}u)\D_j\D_{j'}\na a_\la \ |\ |\D_j
a_\la|^{p-2}\D_ja_\la\bigr)\Bigr).
\end{split}
\eeno Then we deduce from \eqref{4.3} that
\beq
\label{4.4}
\begin{split} \|\D_ja_\la(t)\|_{L^p}&+\la
\int_0^tf(t')\|\D_ja_\la(t')\|_{L^p}\,dt'\\
&\leq \|\D_ja_0\|_{L^p}+C\Bigl(\sum_{|j'-j|\leq 4}\bigl(\|[\D_j;
S_{j'-1}u]\D_{j'}\na a_\la
\|_{L^1_t(L^p)}\\
&\quad+\|(S_{j'-1}u-S_{j-1}u)\D_j\D_{j'}\na
a_\la\|_{L^1_t(L^p)}\bigr)+\|\cR(u,\na a_\la)\|_{L^1_t(L^p)}\Bigr).
\end{split}
\eeq Applying the classical estimate on commutator (see \cite{BCD}
for instance) leads to \beno
\begin{split}
&\sum_{|j'-j|\leq 4}\|[\D_j; S_{j'-1}u]\D_{j'}\na a_\la
\|_{L^1_t(L^p)}\\
&\ \lesssim  \sum_{|j'-j|\leq 4}\Bigl(\|S_{j'-1}\na
u^h\|_{L^1_t(L^\infty)}\|\D_{j'}a_\la\|_{L^\infty_t(L^p)}+\int_0^t\|S_{j'-1}\na
u^3(t')\|_{L^\infty}\|\D_{j'}a_\la(t')\|_{L^p}\,dt'\Bigr)\\
&\ \lesssim \sum_{|j'-j|\leq 5}\Bigl(d_{j'}2^{-\f{3j'}{p}}\| \nabla
u^h\|_{{L}^1_t(L^\infty)}\|a_\la\|_{\wt{L}^\infty_t(\cB_{p}^{\f3p})}+\int_0^t\|
\nabla u^3(t')\|_{L^\infty}\|\D_{j'}a_\la(t')\|_{L^p}\,dt'\Bigr)\\
&\ \lesssim d_j2^{-\f{3j}{p}}\bigl(\| \nabla
u^h\|_{{L}^1_t(L^\infty)}\|a_\la\|_{\wt{L}^\infty_t(\cB_{p}^{\f3p})}
+\int_0^tf(t')\|a_\la(t')\|_{\cB_{p}^{\f3p}}\,dt'\bigr).
\end{split}
\eeno Similarly we get, by applying Lemma \ref{lem2.1}, that \beno
&&\sum_{|j'-j|\leq 4}\|(S_{j'-1}u-S_{j-1}u)\D_j\D_{j'}\na
a_\la\|_{L^1_t(L^p)}\\
&&\lesssim  \sum_{|j'-j|\leq 4}\Bigl(\|(S_{j'-1}\na u^h-S_{j-1}\na
u^h)\|_{L^1_t(L^\infty)}\|\D_j
a_\la\|_{L^\infty_t(L^p)}\\
&&\qquad+\int_0^t\|(S_{j'-1}\na u^3-S_{j-1}\na
u^3)(t')\|_{L^\infty}\|\D_j
a_\la(t')\|_{L^p}\,dt'\Bigr)\\
&& \lesssim d_j2^{-\f{3j}{p}}\| \nabla
u^h\|_{{L}^1_t(L^{\infty})}\|a_\la\|_{\wt{L}^\infty_t(\cB_{p}^{\f3p})}
+\sum_{|j'-j|\leq 4}\int_0^t\|\nabla
u^3(t')\|_{L^{\infty}}\|\D_j a_\la(t')\|_{L^p}\,dt' \\
&&\lesssim d_j2^{-\f{3j}{p}}\bigl(\| \nabla
u^h\|_{{L}^1_t(L^{\infty})}\|a_\la\|_{\wt{L}^\infty_t(\cB_{p}^{\f3p})}
+\int_0^tf(t')\|a_\la(t')\|_{\cB_{p}^{\f3p}}\,dt'\bigr). \eeno

On the other hand, as  $p>3$ and $\nabla a\in
\wt{L}_T^\infty(\cB^{\f3p-1}_{p})$, applying Lemma \ref{lem2.1} once
again gives rise to \beno
\begin{split}
\|S_{j'+2}\na_ha_\la\|_{L^\infty_t(L^p)}\lesssim &\sum_{\ell\leq
j'-2}2^{\ell}\|\D_\ell a_\la\|_{L^\infty_t(L^p)}\\
\lesssim&\sum_{\ell\leq j'-2}d_{\ell} 2^{\ell
(1-\f3p)}\|a_\la\|_{\wt{L}^\infty_t(\cB_{p}^{\f3p})}\lesssim&
d_{j'}2^{j'(1-\frac{3}{p})}\|a_\la\|_{\wt{L}^\infty_t(\cB_{p}^{\f3p})},\end{split}
\eeno so that \beno
\begin{split}
\sum_{j'\geq
j-N_0}\|S_{j'+2}\na_ha_\la\|_{L^\infty_t(L^p)}\|\D_{j'}u^h\|_{L^1_t(L^\infty)}
\lesssim & \sum_{j'\geq j-N_0} d_{j'}2^{-\f{3j'}p}
\|a_\la\|_{\wt{L}^\infty_t(\cB_{p}^{\f3p})}\|\na
u^h\|_{L^1_t(L^\infty)}\\
\lesssim & \,\,d_j2^{-\f{3j}p}\|\na
u^h\|_{L^1_t(L^\infty)}\|a_\la\|_{\wt{L}^\infty_t(\cB_{p}^{\f3p})}.
\end{split}
\eeno It follows from the same lines that \beno
\begin{split}
\sum_{j'\geq j-N_0}&\int_0^t\|S_{j'+2}\p_3a_\la(t')\|_{L^p}\|\D_{j'}u^3(t')\|_{L^\infty}\,dt'\\
\lesssim &\sum_{j'\geq
j-N_0}2^{j'(1-\frac3p)}\int_0^td_{j'}(t')\|a_\la(t')\|_{\cB^{\f3p}_p}
\|\D_{j'} u^3(t')\|_{L^\infty}\,dt'\\
\lesssim &\sum_{j'\geq j-N_0}d_{j'}2^{-\f{3j'}p}
\int_0^t\|a_\la(t')\|_{\cB^{\f3p}_p} \|\nabla
u^3(t')\|_{L^\infty}\,dt' \lesssim  d_j
2^{-\f{3j}{p}}\int_0^tf(t')\|a_\la(t')\|_{\cB_{p}^{\f3p}}\,dt'.
\end{split}
\eeno As a consequence, we obtain
 \beno
\|\D_j(\cR(u,{\na a_\la}))\|_{L^1_t(L^p)}
&\lesssim &
\sum_{j'\geq
j-N_0}\Bigl(\|S_{j'+2}\na_ha_\la\|_{L^\infty_t(L^p)}\|\D_{j'}u^h\|_{L^1_t(L^\infty)}\\
 & &\quad\qquad{}+\int_0^t\|S_{j'+2}\p_3a_\la(t')\|_{L^p}\|\D_{j'}u^3(t')\|_{L^\infty}\,dt'\Bigr)\\
& \lesssim& d_j2^{-\f{3j}{p}}\Bigl(\| \nabla
u^h\|_{{L}^1_t(L^{\infty})}\|a_\la\|_{\wt{L}^\infty_t(\cB_{p}^{\f3p})}
+\int_0^tf(t')\|a_\la(t')\|_{\cB_{p}^{\f3p}}\,dt'\Bigr).
\eeno Substituting the above estimates into \eqref{4.4}
and taking summation for $j$ in~$\Z,$ we arrive at \beno
\begin{split}
\begin{split}
\|a\|_{\wt{L}^{\infty}_t(\cB_{p}^{\f3p})}+&\la\int_0^tf(t')\|a_\la(t')\|_{\cB_{p}^{\f3p}}\,dt'\\
&\leq \|a_0\|_{\cB_{p}^{\f3p}}+C\Bigl(\|\nabla
u^h\|_{L^1_t(L^{\infty})}\|a\|_{\wt{L}^{\infty}_t(\cB_{p}^{\f3p})}+\int_0^tf(t')\|a_\la(t')\|_{\cB_{p}^{\f3p}}\,dt'
\Bigr).
\end{split}
\end{split}
\eeno Taking $\la\geq 2C$ in the above inequality, we conclude the
proof of \eqref{4.2}.
\end{proof}

Following the same line to the proof of Lemma \ref{prop4.1}, we can
also prove the following Lemma, which will be used in the proof of
Theorem \ref{thm5} in Section \ref{sectcz}.

\begin{lem}\label{lem3.2cz}
{\sl Let $q$ be in~$ [1,\infty]$ and $s$ in~$ ]0,1[.$ Then given an initial data
$a_0$ in~$\cB^s_q(\R^3)$ and  a vector field~$u$ in~$ L^1([0,T]; Lip(\R^3))$ with $\dv
u=0, $ \eqref{4.1} has a unique solution $a$ in~$
\cC([0,T];\cB^s_q(\R^3))\cap \wt{L}^\infty_T(\cB^{s}_q(\R^3)).$
Moreover, there holds \beq\label{2.6cz}
\|a\|_{\wt{L}^\infty_T(\cB^{s}_{q})}\leq
\|a_0\|_{\cB^s_q}\exp\Bigl(C\int_0^T\|\na
u(t')\|_{L^\infty}\,dt'\Bigr). \eeq }
\end{lem}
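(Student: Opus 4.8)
The plan is to follow the proof of Lemma~\ref{prop4.1} almost verbatim, discarding the two features special to that statement: here there is no anisotropic splitting of the convection velocity into $u^h$ and $u^3$, and no exponential weight $\exp\bigl(-\lambda\int_0^t f\bigr)$ is needed. What remains is the classical propagation of $\cB^s_q=B^s_{q,1}$ regularity along a Lipschitz divergence-free flow. As in Lemma~\ref{prop4.1}, both existence and uniqueness reduce to the a priori estimate \eqref{2.6cz} for suitable smooth approximate solutions, so I would establish \eqref{2.6cz} first, for smooth solutions, and then recover the full statement.

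First I apply $\Delta_j$ to \eqref{4.1}, which gives $\partial_t \Delta_j a + u\cdot\nabla\Delta_j a = R_j$ with $R_j\eqdef[u\cdot\nabla;\Delta_j]a$. Taking the $L^2$ inner product with $|\Delta_j a|^{q-2}\Delta_j a$ and using $\dive u = 0$ to cancel the convection contribution exactly as in \eqref{4.3} (for $q<\infty$; the endpoint $q=\infty$ follows by a limiting argument), I get $\frac1q\frac{d}{dt}\|\Delta_j a\|_{L^q}^q = \bigl(R_j\,\big|\,|\Delta_j a|^{q-2}\Delta_j a\bigr)\le \|R_j\|_{L^q}\|\Delta_j a\|_{L^q}^{q-1}$, whence $\frac{d}{dt}\|\Delta_j a\|_{L^q}\le\|R_j\|_{L^q}$ and, after integration in time, $\|\Delta_j a(t)\|_{L^q}\le\|\Delta_j a_0\|_{L^q}+\int_0^t\|R_j(t')\|_{L^q}\,dt'$.

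The heart of the matter is the commutator term $R_j$. Decomposing it by Bony's formula \eqref{pd} and invoking the classical commutator estimate of \cite{BCD}, I obtain, for each time $t'$, a bound $2^{js}\|R_j(t')\|_{L^q}\lesssim d_j(t')\,\|\nabla u(t')\|_{L^\infty}\,\|a(t')\|_{\cB^s_q}$ with $\sum_j d_j(t')=1$. Multiplying the integrated inequality by $2^{js}$, taking the supremum over times in $[0,t]$ and then the $\ell^1$ sum over $j$, and exchanging the sum with the time integral, I arrive at $\|a\|_{\wt L^\infty_t(\cB^s_q)}\le\|a_0\|_{\cB^s_q}+C\int_0^t\|\nabla u(t')\|_{L^\infty}\|a\|_{\wt L^\infty_{t'}(\cB^s_q)}\,dt'$. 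Gronwall's lemma then yields \eqref{2.6cz}.

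I expect the commutator estimate to be the only delicate point, and it is precisely what forces the restriction $s\in\,]0,1[$: the control of $[u\cdot\nabla;\Delta_j]a$ by $\|\nabla u\|_{L^\infty}\|a\|_{\cB^s_q}$ holds only for $|s|<1$ (the low-frequency interactions require $s>-1$, while the paraproduct piece $T_{\nabla a}u$ requires $s<1$). Once \eqref{2.6cz} is in hand, existence follows by regularizing the data $a_0$ and the field $u$, solving along the regularized flow, and passing to the limit with the help of the uniform bound \eqref{2.6cz}; uniqueness follows from an $L^q$ estimate on the difference of two solutions (again using $\dive u=0$); and the time continuity $a\in\cC([0,T];\cB^s_q)$ is a consequence of the equation together with the $\ell^1$ summability built into $\cB^s_q=B^s_{q,1}$, which renders the high-frequency tail uniformly small in time.
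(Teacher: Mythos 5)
Your proposal is correct and follows essentially the same route as the paper: the paper likewise reduces everything to the a priori estimate \eqref{2.6cz} for smooth solutions, applies $\Delta_j$ and the $L^q$ energy argument with the divergence-free cancellation, splits the commutator via Bony's decomposition into the two paraproduct/commutator pieces (controlled by the classical commutator estimate of \cite{BCD}) plus the remainder $\sum_{j'\ge j-N_0}\Delta_{j'}u\,S_{j'+2}\nabla a$ (where Bernstein and $s<1$ are used), and concludes by summation and Gronwall. Your identification of $s<1$ as the binding constraint coming from the $T_{\nabla a}u$/remainder piece matches the paper's use of that hypothesis exactly.
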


\begin{proof} Notice once again that the proof of Lemma \ref{lem3.2cz} basically follows from \eqref{2.6cz},
 we shall only detail the proof of
\eqref{2.6cz} for smooth enough solutions to \eqref{4.1}. Indeed it
follows from the  proof of \eqref{4.4} that \beno\begin{split}
\frac{d}{dt}\|\D_ja(t)\|_{L^q}\leq &\sum_{|j'-j|\leq 4}\|[\D_j;
S_{j'-1}u]\na \D_{j'}a(t)\|_{L^q}\\
&+\sum_{|j'-j|\leq 4}\|\bigl(S_{j'-1}u-S_{j-1}u\bigr)\cdot\na
\D_{j'}\D_ja(t)\|_{L^q}\\
&+\sum_{j'\geq j-N_0}\|\D_{j'}uS_{j'+2}\na a(t)\|_{L^q}.
\end{split}
\eeno We get by using the classical commutator's estimate (see
\cite{BCD} for instance) that
\beno
\sum_{|j'-j|\leq 4}\|[\D_j; S_{j'-1}u]\na
\D_{j'}a(t)\|_{L^q} & \lesssim & \sum_{|j'-j|\leq 4} \|S_{j'-1}(\na
u)(t)\|_{L^\infty}\|\D_{j'}a(t)\|_{L^q}\\
 & \lesssim & d_j(t)2^{-js}\|\na
u(t)\|_{L^\infty}\|a(t)\|_{\cB^{s}_{q}}.
\eeno
The same estimate holds for $\sum_{|j'-j|\leq
4}\|\bigl(S_{j'-1}u-S_{j-1}u\bigr)\na \D_{j'}\D_ja(t)\|_{L^q}.$
Whereas  applying Bernstein's Lemma and using the fact that $s<1$
yields \beno
\begin{split}
\sum_{j'\geq j-N_0}\|\D_{j'}uS_{j'+2}\na a(t)\|_{L^q}\lesssim &
\sum_{j'\geq j-N_0}\|\D_{j'}u(t)\|_{L^\infty}\|S_{j'+2}\na
a(t)\|_{L^q}\\
\lesssim &d_j(t)2^{-js}\|\na
u(t)\|_{L^\infty}\|a(t)\|_{\cB^{s}_{q}}.
\end{split}
\eeno As a consequence, we arrive at \beno
\|\D_ja\|_{L^\infty_t(L^q)}\leq\|\D_ja_{0}\|_{L^q}+2^{-js}\int_0^td_j(t')
\|\na u(t')\|_{L^\infty}\|a(t')\|_{\cB^{s}_{q}}\,dt',\eeno which
gives rise to \beno \|a\|_{\wt{L}^\infty_t(\cB^{s}_{q})}\leq
\|a_{0}\|_{B^{s}_{q}}+C\int_0^t\|\na
u(t')\|_{L^\infty}\|a(t')\|_{B^{s}_{q}}\,dt'. \eeno Applying
Gronwall inequality  leads to \eqref{2.6cz}.
\end{proof}

\setcounter{equation}{0}\label{sect3}
\section{The proof of Theorem\refer{th1}}
We shall only prove that if~$(a_0,u_0)$ is a smooth initial data
satisfying the smallness condition\refeq {1.6}  then the associated
solution of~$(a,u)$ of \eqref{1.3} satisfies \eqref{th1.a}, which
implies a global control of the~$L^1$ in time with value
in~$L^\infty$ for the gradient of~$u$. With this estimate, it is
standard to prove the $\wt{L}^\infty(\R^+;\cB^{-1+\f3r}_r(\R^3))\cap
L^1(\R^+;\cB^{1+\f3r}_r(\R^3))$ for the velocity field (see
\cite{abidi, AP, PZ2} for instance). In order to prove the existence
part of Theorem\refer{th1}, we regularize the initial data and then
pass to the limit. These technical details are omitted. The
uniqueness part of  Theorem\refer{th1} follows from Theorem 1 of
\cite{DM}.

 Let us denote by~$T^\star$ the maximal time of existence of the solution $(a,u)$ of \eqref{1.3} associated
 with the smooth initial data~$(a_0,u_0)$. Let us consider~$T^+$  defined by
\beq
\label{th1demoeq0}
 T^+ \eqdefa \sup\Bigl\{ T<T^\star\,/\ \zeta_T\eqdefa
 \mu \|a\|_{L^\infty_T(\cB^{\f3p}_p)} +\|u^h\|_{L^\infty_T(\fB^0_p)} +\mu\|u^h\|_{L^1_T(\fB^2_p)} \leq c_0\mu\Bigr\}.
\eeq
  where~$c_0$ will be chosen small enough later on.

  We want first to estimate~$\|g\|_{L^1_T(\fB^0_p)}$ where
  $$
  g(a,u)\eqdefa -u\cdot\nabla u +\mu a \D u-(1+a)\nabla \Pi
  $$
 As in Lemma\refer{prop4.1}, we  define
\beq \label{p.1} b_\lam(t) \eqdefa b(t) \exp\Bigl(-\lam \int_0^t
\|u^3(t',\cdot)\|_{\fB^2_p} dt'\Bigr). \eeq This will allow to make
the term~$\mu a_\lam \Delta u^3$ integrable thanks to
Lemma\refer{prop4.1}.  Notice that taking space divergence to the
momentum equation of \eqref{1.3} gives $\dive g(a,u)=0,$
Lemma\refer{lemmapressureinhomo} implies that
$$
\|g(a,u)_\lam(t)  \|_{\fB^0_p} \lesssim \|(\dive (u\otimes u)-\mu a\D u)_\lam\|_{\fB^0_p}.
$$
Let us estimate the righthand side term. The key point to the
estimation is that it does not contain any  terms  which are
quadratic with respect  to~$u^3$.

 If~$(j,k)$ is in~$\{1,2\}^2$, we have, thanks to law of product of Lemma\refer{lem2.2},
 \ben
 \|\partial_j(u^ju^k)_\lam \|_{\fB^0_p} &=  &  \|(\partial_j(u^ju^k_\lam) \|_{\fB^0_p}\nonumber\\
\label{th1demoeq1} & \lesssim & \|u^h\|_{\fB^0_p} \|u^h_\lam\|_{\fB^2_p}  +  \|u^h_\lam\|_{\fB^0_p} \|u^h\|_{\fB^2_p}.
 \een
Because~Ê$p<4$,  law of product of Lemma\refer{lem2.2} and $\dive
u=0$ implies that
  \ben
 \|(\partial_3(u^3u^k)_\lam\|_{\fB^0_p} &=  &  \|\partial_3u^3u^k_\lam+u^3\p_3u^k_\la\|_{\fB^0_p}\nonumber\\
\label{th1demoeq2} & \lesssim &
\|u^h\|_{\fB^2_p}\|u^h_\lam\|_{\fB^0_p} +\|u^3\|_{\fB^1_p}
\|u^h_\lam\|_{\fB^1_p}.
 \een
 The term~$\partial_3(u^3)^2$, which is the only possible quadratic term,
  is equal to~$-2 u^3\dive_h u^h$ thanks to divergence free condition. As above, we have
  \ben
 \|(\partial_3(u^3)^2)_\lam \|_{\fB^0_p} &=  &  2\|u^3\dive_hu^h_\lam\|_{\fB^0_p}\nonumber\\
\label{th1demoeq3} & \lesssim & \|u^3\|_{\fB^1_p} \|u^h_\lam\|_{\fB^1_p}.
 \een
 Laws of product of Lemma\refer{lem2.2} together with Lemma\refer{propanisoiso} gives
 \ben
\label{th1demoeq4}
 \mu \|(a\D u^h)_\lam \|_{\fB^0_p}   \lesssim   \mu \|a\|_{\cB^{\f3p}_p} \|u^h_\lam\|_{\fB^2_p}\andf
 \mu \|(a\D u^3)_\lam \|_{\fB^0_p}  \lesssim   \mu \|a_\lam\|_{\cB^{\f3p}_p} \|u^3\|_{\fB^2_p}.
 \een
 Lemma\refer{lemmapressureinhomo}  and Estimates\refeq{th1demoeq1}--(\ref{th1demoeq4})  gives, for any positive~$\lam$,
 \beq
 \label{th1demoeq6}
\begin{split}  \|g(a,u)_\lam(t)  \|_{\fB^0_p} & \lesssim  \|u^h\|_{\fB^0_p} \|u^h_\lam\|_{\fB^2_p}  +  \|u^h_\lam\|_{\fB^0_p} \|u^h\|_{\fB^2_p}+\|u^3\|_{\fB^1_p} \|u^h_\lam\|_{\fB^1_p}\\
&\qquad\qquad\qquad\qquad\qquad\qquad\qquad{}+\mu
\|a\|_{\cB^{\f3p}_p} \|u^h_\lam\|_{\fB^2_p}+\mu
\|a_\lam\|_{\cB^{\f3p}_p} \|u^3\|_{\fB^2_p}.
\end{split}
 \eeq
 Let us first estimate~$u^3$. As~$u^3$ satisfies
  $$
 \partial_t u^3 -\D u^3  = \bigl(-u\cdot \nabla u +\mu a\D u +(1+a)\nabla \Pi\bigr)^3,
 $$
 we get, by using\refeq{th1demoeq6} with~$\lam=0$, that
\beno
\begin{split}
& 2^{j\left (-1+\frac 2p\right)}2^{\frac{k}p} \bigl( \|\D_j^h\D_k^v
u^3\|_{L^\infty_T(L^p)}  +\mu(2^{2k}+2^{2j} \bigr)
 \|\D_j^h\D_k^v u\|_{L^1_T(L^p)} \bigr) \\
&\quad{}
\lesssim 2^{j\left (-1+\frac 2p\right)}2^{\frac{k}p}\|\D_j^h\D_k^v u^3_0\|_{L^p}+\int_0^T d_{j,k}(t) \bigl (  \|u^h(t)\|_{\fB^0_p} \|u^h(t)\|_{\fB^2_p}+\|u^3(t)\|_{\fB^1_p} \|u^h(t)\|_{\fB^1_p}\\
&\qquad\qquad\qquad\qquad\qquad\qquad\qquad\qquad\qquad\qquad{}+\mu
\|a(t)\|_{\cB^{\f3p}_p}( \|u^h(t)\|_{\fB^2_p}+
\|u^3(t)\|_{\fB^2_p})\bigr)dt.
\end{split}
\eeno
After summation,  this gives
\beno
\begin{split}
&
\| u^3\|_{\wt L^\infty_T(\fB^0_p)}  +\mu \|\ u^3\|_{L^1_T(\fB^2_p)}  \lesssim \|u^3_0\|_{\fB^0_p}+ \int_0^T  \bigl (\|u^h(t)\|_{\fB^0_p} \|u^h(t)\|_{\fB^2_p} \\
&\qquad\qquad\qquad\qquad\qquad{} +\|u^3(t)\|_{\fB^1_p}
\|u^h(t)\|_{\fB^1_p}+\mu \|a(t)\|_{\cB^{\f3p}_p}
(\|u^h(t)\|_{\fB^2_p}+ \|u^3(t)\|_{\fB^2_p})\bigr)dt.
\end{split}
\eeno
By interpolation, we have
$$
\|u^3(t)\|_{\fB^1_p} \|u^h(t)\|_{\fB^1_p} \leq \|u^3(t)\|^\frac 1 2_{\fB^0_p}\|u^3(t)\|_{\fB^2_p}^{\frac 1 2}
 \|u^h(t)\|_{\fB^0_p}^{\frac 1 2}| \|u^h(t)\|_{\fB^2_p}^{\frac 1 2}.
$$
Using the induction hypothesis\refeq{th1demoeq0} and  Cauchy
-Schwartz inequality, we get \beno \begin{split} \| u^3\|_{\wt
L^\infty_T(\fB^0_p)} +\mu \|\ u^3\|_{L^1_T(\fB^2_p)} \lesssim
\|u^3_0\|_{\fB^0_p} +\frac {\zeta_T^2}{\mu}& +\zeta_T
\|u^3\|_{L^1_T(\fB^2_p)}\\
& +\frac{\zeta_T}{ \mu} \bigl(\|u^3\|_{L^\infty_T(\fB^0_p)} \mu
\|u^3\|_{L^1_T(\fB^2_p)} \bigr)^{\frac 1 2}.
\end{split}
\eeno Thus, if~$c_0$  is small enough in \eqref{th1demoeq0}, we get
 \beq
 \label{th1demoeq7}
\forall T<T^\star,\ \| u^3\|_{\wt L^\infty_T(\fB^0_p)}  +\mu \|\ u^3\|_{L^1_T(\fB^2_p)}
\lesssim \|u^3_0\|_{\fB^0_p} +\zeta_T.
 \eeq
 The estimate on~$u^h$ is different. Because of the term~$\mu a\D u^3$ which has no chance to be small
 and which appears  in the equation of~$u^h$, we need to use conjugating with an exponential weight. Let us point out that~$u_\lam$ is the solution of
 $$
 \left\{
\begin{array}{c}
\partial_t u_\lam -\mu\D u_\lam +\lam \|u^3(t)\|_{\fB^2_p} u_\lam =\bigl (-u\cdot\nabla u+\mu a \D u -(1+a)\Pi\bigr)_\lam,\\
\dive u_\lam =0\,,\ u_{|t=0} =0.
\end{array}
\right.
 $$
 Let us consider any subinterval~$I=[I^-,I^+]$ of~$[0,T]$.  Then applying\refeq{th1demoeq6}, we infer
 \beno
\begin{split}
&
\| u^h_\lam\|_{\wt L^\infty(I;\fB^0_p)}  +\mu \|\ u^h_\lam\|_{L^1(I;\fB^2_p)}   \lesssim \|u^h_\la(I^-)\|_{\fB^0_p}+ \int_I \bigl (\|u^h_\lam(t)\|_{\fB^0_p} \|u^h(t)\|_{\fB^2_p}\\
&\qquad\qquad\qquad\qquad\qquad\qquad\qquad{} +\|u^h(t)\|_{\fB^0_p}
\|u^h_\lam(t)\|_{\fB^2_p}
+\|u^3(t)\|_{\fB^1_p} \|u^h_\lam(t)\|_{\fB^1_p}\\
&\qquad\qquad\qquad\qquad\qquad\qquad\qquad\qquad{}+\mu
\|a(t)\|_{\cB^{\f3p}_p} \|u^h_\lam(t)\|_{\fB^2_p}+\mu
\|a_\lam(t)\|_{\cB^{\f3p}_p} \|u^3(t)\|_{\fB^2_p}\bigr)dt.
\end{split}
\eeno Using the induction hypothesis \eqref{th1demoeq0} and
Cauchy-Schwarz inequality, this gives
$$
\longformule{ \| u^h_\lam\|_{\wt L^\infty(I;\fB^0_p)}  +\mu \|\
u^h_\lam\|_{L^1(I;\fB^2_p)}   \lesssim \|u^h_\lam(I^-)\|_{\fB^0_p}+
\frac{\zeta_T}\mu  \bigl (\|u^h_\lam\|_{L^\infty(I;\fB^0_p)} +\mu
\|u^h_\lam\|_{L^1(I;\fB^2_p)}\bigr) } { {}
 +\|u^3\|_{L^2(I;\fB^1_p)} \|u^h_\lam\|_{L^2(I;\fB^1_p)}+\mu \int_I \|a_\lam(t)\|_{\cB^{\f3p}_p} \|u^3(t)\|_{\fB^2_p}dt.
 }
 $$
 By interpolation, this gives
 $$
 \longformule{
\| u^h_\lam\|_{\wt L^\infty(I;\fB^0_p)}  +\mu \|\
u^h_\lam\|_{L^1(I;\fB^2_p)} \lesssim \|u^h_\lam(I^-)\|_{\fB^0_p}+
\mu \int_I \|a_\lam(t)\|_{\cB^{\f3p}_p} \|u^3(t)\|_{\fB^2_p}dt
 }
 {{}
+\Bigl( \frac{\zeta_T}{\mu} +\frac 1 {\mu^{\frac 1 2}}
\|u^3\|_{L^2(I;\fB^1_p)}\Bigr) \bigl
(\|u^h_\lam\|_{L^\infty(I;\fB^0_p)} +\mu
\|u^h_\lam\|_{L^1(I;\fB^2_p)}\bigr). }
$$
The induction hypothesis \eqref{th1demoeq0} implies that, if~$c_0$
is chosen small enough in \eqref{th1demoeq0}, then
$$
\longformule{
\| u^h_\lam\|_{\wt L^\infty(I;\fB^0_p)}  +\mu \|\ u^h_\lam\|_{L^1(I;\fB^2_p)}
 \lesssim \|u^h_\lam(I^-)\|_{\fB^{\f3p}_p}+\mu \int_I \|a_\lam(t)\|_{\cB^{\f3p}_p} \|u^3(t)\|_{\fB^2_p}dt
}
{{}
+\frac 1 {\mu^{\frac 1 2}} \|u^3\|_{L^2(I;\fB^1_p)} \bigl (\|u^h_\lam\|_{L^\infty(I;\fB^0_p)} +\mu  \|u^h_\lam\|_{L^1(I;\fB^2_p)}\bigr).
}
$$
Thus, two  constant~$C_0$ and~$C_1$ exist such that, if the
interval~Ê$I$ satisfies \beq \label{th1demoeq8} \int_I
\|u^3(t)\|^2_{\fB^1_p} dt \leq \frac \mu {C_1}\,\virgp \eeq then we
have
 \beq
\label{th1demoeq9} \| u^h_\lam\|_{\wt L^\infty(I;\fB^0_p)}  +\mu \|\
u^h_\lam\|_{L^1(I;\fB^2_p)}   \leq C_0
\Bigl(\|u^h_\lam(I^-)\|_{\fB^0_p}+ \mu \int_I
\|a_\lam(t)\|_{\cB^{\f3p}_p} \|u^3(t)\|_{\fB^2_p}dt\Bigr). \eeq Now
let us decompose the interval~Ê$[0,T]$ into intervals such that the
smallness condition\refeq{th1demoeq8} is satisfied. Let us define
the sequence~$(t_j)_{0\leq j\leq N}$ such that~$t_0=0$,~$t_N=T$,
$$
\forall j\in \{0,\cdots, N-2\}\,,\ \int_{t_j}^{t_{j+1} }
\|u^3(t)\|_{\fB^1_p}^2 dt =\frac \mu {C_1}\andf \int_{t_{N-1}}
^{t_N} \|u^3(t)\|_{\fB^1_p}^2 dt \leq \frac \mu {C_1}\,\cdotp
$$
Let us observe that
$$
\int_0^T  \|u^3(t)\|_{\fB^1_p}^2 dt \geq \frac \mu {C_1} (N-2)
$$
which implies that the number of intervals~$N$ satisfies \beq
\label{th1demoeq10} N\leq \frac {C_1} \mu \int_0^T
\|u^3(t)\|_{\fB^1_p}^2 dt  +2. \eeq Now let us prove by induction
that, for any~$j\leq N$, we have
$$
(P_j)\qquad \|u^h_\lam \|_{L^\infty([0,t_j],\fB^0_p)} +\mu
\|u^h_\lam \|_{L^1([0,t_j],\fB^2_p)} \leq C_0^j
\Bigl(\|u_0\|_{\fB^0_p} +\mu \int_0^{t_j} \|a_\lam(t)
\|_{\cB^{\f3p}_p} \|u^3(t)\|_{\fB^0_p} dt\Bigr).
$$
For~$j=1$, it is simply\refeq{th1demoeq9} applied with~$I=[0;t_1]$.
Now, let us assume~$(P_j)$ for~$j\leq N-1$. Applying\refeq{th1demoeq9} with~$I=[t_j,t_{j+1}]$ gives
$$
\| u^h_\lam\|_{\wt L^\infty([t_j,t_{j+1}]; \fB^0_p)} +\mu \|u^h_\lam\|_{L^1([t_j,t_{j+1}];\fB^2_p)} \leq C_0
\Bigl(\|u^h_\lam(t_j)\|_{\fB^0_p}+\mu \int_{t_j}^{t_{j+1}}\!
\|a_\lam(t)\|_{\cB^{\f3p}_p} \|u^3(t)\|_{\fB^2_p}dt\Bigr).
$$
The induction hypothesis~$(P_j)$ implies that
$$
\longformule
{ \| u^h_\lam\|_{\wt L^\infty([t_j,t_{j+1}]; \fB^0_p)}
+\mu \|u^h_\lam\|_{L^1([t_j,t_{j+1}];\fB^2_p)}
}
 {{} \leq C_0^{j+1}
\Bigl(\|u_0\|_{\fB^0_p} +\mu \int_0^{t_j} \|a_\lam(t)
\|_{\cB^{\f3p}_p} \|u^3(t)\|_{\fB^0_p} dt\Bigr) +C_0 \mu
\int_{t_j}^{t_{j+1}}\! \|a_\lam(t)\|_{\cB^{\f3p}_p}
\|u^3(t)\|_{\fB^2_p}dt }
$$
which gives obviously~$(P_{j+1})$ and thus~$(P_N)$. Because
of\refeq{th1demoeq10}, this gives \beq\label{p.1w} \begin{split}\|
u^h_\lam\|_{\wt L^\infty_T(\fB^0_p)}  &+\mu \|\
u^h_\lam\|_{L^1_T(\fB^2_p)} \\
& \leq C_0 ^{\frac {C_1} \mu\int_0^T \|u^3(t)\|_{\fB^1_p}^2dt + 2}
\Bigl(\|u^h_0\|_{\fB^0_p} +\mu \int_0^{T} \|a_\lam(t)
\|_{\cB^{\f3p}_p} \|u^3(t)\|_{\fB^2_p} dt\Bigr). \end{split} \eeq On
the other hand, notice from Lemma \ref{lem2.1} that $\|\na
u^3(t)\|_{L^\infty}\leq C\|u^3(t)\|_{\fB^2_p},$ for $\lam$ large
enough, we get, by applying Lemma\refer{prop4.1} with
$f(t)=\|u^3(t)\|_{\fB^2_p},$ that
\beno
\|a_\la\|_{\wt{L}^{\infty}_T(\cB_{p}^{\f3p})}+\f{\la}2 \int_0^T
\|u^3(t)\|_{\fB^2_p}\|a_\la(t) \|_{\cB_{p}^{\f3p}}dt\leq
\|a_0\|_{\cB_{p}^{\f3p}}+C\|u^h\|_{L^1_T(\fB^2_p)}\|a_\lambda\|_{\wt{L}_t^\infty(\cB_{p}^{\f3p})}.
\eeno
Thus as long as $\zeta_T/\mu$ is chosen sufficiently small,
 the induction hypothesis \eqref{th1demoeq0} leads to
\beq
\label{p.1q}
\forall T<T^\star\,,\ \|a_\la\|_{\wt{L}^{\infty}_T(\cB_{p}^{\f3p})}+{\la}
\int_0^T \|u^3(t)\|_{\fB^2_p}\|a_\la(t) \|_{\cB_{p}^{\f3p}}dt\leq
2\|a_0\|_{\cB_{p}^{\f3p}}. \eeq Substituting \eqref{p.1q} into
\eqref{p.1w} gives rise to \beno \begin{split} \| u^h_\lam\|_{\wt
L^\infty_T(\fB^0_p)}
+\mu(\|a_\la\|_{\wt{L}^{\infty}_T(\cB_{p}^{\f3p})}&+ \|\
u^h_\lam\|_{L^1_T(\fB^2_p)})\\
&  \lesssim \bigl(\|u^h_0\|_{\fB^0_p} +\mu
\|a_0\|_{\cB^{\f3p}_p}\bigr) \exp \Bigl(\frac {C_1'} \mu \int_0^T
\|u^3(t)\|_{\fB^1_p}^2dt \Bigr)
\end{split}
\eeno
for $\lam$ large enough and $T\leq T^+.$ While thanks to
\eqref{p.1}, one has \beno \bigl(\| u^h\|_{\wt L^\infty_T(\fB^0_p)}
+\mu \|\
u^h\|_{L^1_T(\fB^2_p)}\bigr)\exp\Bigl(-\la\int_0^T\|u^3(t)\|_{\fB^2_p}dt\Bigr)
\leq \| u^h_\lam\|_{\wt L^\infty_T(\fB^0_p)}  +\mu \|\
u^h_\lam\|_{L^1_T(\fB^2_p)}. \eeno As a consequence, we obtain \beno
\begin{split}
\| u^h\|_{\wt L^\infty_T(\fB^0_p)}
+&\mu(\|a\|_{\wt{L}^{\infty}_T(\cB_{p}^{\f3p})}+ \|\
u^h\|_{L^1_T(\fB^2_p)})\\ \lesssim& \bigl(\|u^h_0\|_{\fB^0_p} +\mu
\|a_0\|_{\cB^{\f3p}_p}\bigr) \exp \Bigl(C_1''\int_0^T\bigl(\f1\mu
\|u^3(t)\|_{\fB^1_p}^2+\|u^3(t)\|_{\fB^2_p}\bigr)dt \Bigr),
\end{split}
\eeno for some sufficiently large constant $C_1''.$ This together
with \eqref{th1demoeq7} implies that \beq \label{p.1t} \| u^h\|_{\wt
L^\infty_T(\fB^0_p)}+\mu
(\|a\|_{\wt{L}^{\infty}_T(\cB_{p}^{\f3p})}+\|\
u^h\|_{L^1_T(\fB^2_p)})\leq C_2\bigl(\|u^h_0\|_{\fB^0_p} +\mu
\|a_0\|_{\cB^{\f3p}_p}\bigr)
\exp\Bigl(\f{C_0}{\mu^2}\|u_0^3\|_{\fB^0_p}\Bigr) \eeq  for $t\leq
T^+,$ provided that $\zeta_T/\mu\leq c_0$ is small enough.

We now claim that $T^+=\infty$ if the initial data $(a_0,u_0)$
satisfies \eqref{1.6}. Otherwise, we infer from \eqref{p.1t} that
\beno
 \| u^h\|_{\wt
L^\infty_T(\fB^0_p)}+\mu
(\|a\|_{\wt{L}^{\infty}_T(\cB_{p}^{\f3p})}+\|\
u^h\|_{L^1_T(\fB^2_p)})\leq C_2\eta\quad\mbox{for}\quad t\leq T^+.
\eeno In particular if we choose $\eta$ in \eqref{1.6} is so small
that $\eta\leq \f{c_0}{2C_2},$ one has \beno
 \| u^h\|_{\wt
L^\infty_T(\fB^0_p)}+\mu
(\|a\|_{\wt{L}^{\infty}_T(\cB_{p}^{\f3p})}+\|\
u^h\|_{L^1_T(\fB^2_p)})\leq \f{c_0}2\quad\mbox{for}\quad t\leq T^+,
\eeno which contradicts with   the induction hypothesis
\eqref{th1demoeq0}, and which in turn shows that $T^+=\infty$ under
the assumption \eqref{1.6}. Furthermore, \eqref{th1demoeq7} and
\eqref{p.1t} ensures \eqref{th1.a}. This completes the proof of
Theorem \ref{th1}.

\renewcommand{\theequation}{\thesection.\arabic{equation}}
\setcounter{equation}{0}
 \section{The proof of Theorem \ref{thm5}}\label{sectcz}

\subsection{Outline of proof to  Theorem \ref{thm5}}\label{subsect4.1}\ The purpose of this section is to present the proof of
Theorem \ref{thm5} by following the same line  of that to Theorem
\ref{th1}. Toward this, we shall first construct the approximate
solutions to (\ref{1.3}) with data \eqref{1.6cz} as a perturbation
to the 2-D classical Navier-Stokes system with a parameter. Without
loss of generality, we may assume that the viscous coefficient
$\mu=1$ in \eqref{1.3}. The detailed strategy is as follows:

\no{\bf Step 1.}\ Construction of the approximate solutions.

As in \cite{c-g, GHZ1}, we denote $(v^h, \Pi_0)$ to be the global
smooth solution of the following 2-D Navier-Stokes system depending
on a parameter $y_3:$
\begin{equation*}
(NS2D_3)\quad
\begin{cases}
\pa_t v^h + v^h\cdot\na_h v^h - \Delta_h v^h =- \na_h \Pi_0, \\
\mathrm{div}_h v^h = 0, \\
v^h|_{t = 0} = v^h_0(\cdot, y_3).
\end{cases}
\end{equation*}
{\bf Notations}\ Here and in what follows, we always denote
$$x_h=(x_1,x_2),\quad\na_h=(\pa_{x_1},\pa_{x_2}),\quad \Delta_h=\pa_{x_1}^2+\pa_{x_2}^2,\quad
 \mbox{and}\quad [b]_\e(x)=b(x_h,\e x_3).$$

Then as in \cite{c-g} and\ccite{GHZ1}, we define the approximate solutions
$(v^{\ep}_{app}(t,x),\, \Pi^{\ep}_{app}(t,x))$ as
\begin{equation}\label{appsol}
\begin{split}
v^{\ep}_{app}(t,x) &\eqdefa (v^h,0)(t,x_h, \ep
x_3)\quad\mbox{and}\quad \Pi^{\ep}_{app}(t,x) \eqdefa \Pi_0(t,x_h,
\ep x_3),
\end{split}
\end{equation}
which satisfy
\begin{equation}\label{INSapp}
\begin{cases}
\big( \pa_t v^{\ep}_{app} + v^{\ep}_{app} \cdot \na v^{\ep}_{app} -
\Delta v^{\ep}_{app} + \na \Pi^{\ep}_{app} \big)(t,x_h,x_3) =
F^{\ep}(t,x_h,x_3),\\
\dive v^{\ep}_{app}=0,\\
v^{\ep}_{app}(t, x_h, x_3)|_{t=0}=u_0^{\ep}(x_h, x_3)\eqdefa
(v_0^h,0)(x_h,\ep x_3)
\end{cases}
\end{equation}
with
\begin{equation}\label{F-term-1}
\begin{split}
F^{\ep}(t,x_h,x_3) =\ep F_1(t,x_h,\ep x_3) + F_2^{\ep}(t,x_h,\ep
x_3),
\end{split}
\end{equation}
where
\begin{equation*}\label{F-term-1-a}
\begin{split}
&F_1(t,x_h, y_3) \eqdefa (0,\pa_3 \Pi_0)(t,x_h, y_3)\quad\mbox{
and}\quad F_2^{\ep}(t,x_h, y_3)  \eqdefa \ep^2(\pa_3^2 v^h,0)(t,x_h,
y_3).
\end{split}
\end{equation*}

\no{\bf Step 2.}\ The estimate of the error between the true
solution and the approximate ones.

Let \beq\label{1.7cz} R^{\ep} \eqdefa u^{\ep} - v^{\ep}_{app}\quad
\mbox{ and }\quad Q^{\ep} \eqdefa \Pi^{\ep} - \Pi^{\ep}_{app}.\eeq
Then it follows from (\ref{1.3}) and \eqref{INSapp} that  $(a^\ep,
R^\ep, Q^\ep)$ solves
\begin{equation}\label{remainder-system-1}
\begin{cases}
 \pa_t a^{\ep} + (R^{\ep} + v^{\ep}_{app} )\cdot\na a^{\ep} = 0,\qquad (t,x)\in\R^+\times\R^3, \\
\pa_t R^{\ep} + R^{\ep} \cdot \na R^{\ep} +R^{\ep} \cdot \na
v^{\ep}_{app}
+ v^{\ep}_{app} \cdot \na R^{\ep}- \Delta R^{\ep} + \na Q^{\ep}  \\
\qquad= a^{\ep}(\Delta R^{\ep} + \Delta v^{\ep}_{app} - \na Q^{\ep}
-
\na \Pi^{\ep}_{app} ) - F^{\ep},\\
\dive R^{\ep} =\dive v^{\ep}_{app}= 0,\\
 a^{\ep}(t,x_h,x_3)|_{t = 0} = \ep^{\delta_0} a_0(x_h,\ep x_3),\quad
 R^{\ep}|_{t = 0} = 0.
\end{cases}
\end{equation}
To solve \eqref{remainder-system-1} globally  in the framework of
the anisotropic Besov space $\fB^{0}_p,$ in general, one should
require $L^1(\mathbb{R}^{+}; \fB^{0}_p(\R^3))$ estimate for the
source term $ F^{\ep}$ given by \eqref{F-term-1}. Nevertheless,
according to Lemma \ref{le2.0cz} below (see also (4.7) of
\cite{GHZ1}), we do not have the $L^1(\mathbb{R}^{+};
\fB_{p}^{0}(\R^3))$ estimate for the term $F_2^\ep.$ To deal with
this term, as in \cite{GHZ1}, we denote \beno V^h(t,x_h,x_3) \eqdefa
(\pa_3 v^h,0)(t,x_h,\ep x_3),\eeno then $ F^{\ep}_2=\ep \pa_3 V^h$.
We shall  construct $(R_1^\ep, \, \Pi_{v}^\ep)$ via
\begin{equation}\label{remainder-system-2}
\begin{cases}
\pa_t R_1^\ep  - \Delta R_1^\ep +\nabla \Pi_{v}^\ep =-F^{\ep}_2, \\
\dive R_1^\ep = 0,\\
R_1^\ep|_{t = 0} = 0,
\end{cases}
\end{equation}
then as $\dive V^h=0,$ we have $\dive F_2^\ep=0$ and
$\na\Pi_v^\ep=0,$  so that let \beq \label{1.9cz} w^{\ep} \eqdefa
R^{\ep} - R_1^\ep, \eeq
 to
solve \eqref{remainder-system-1} for $(a^\ep, R^\ep, Q^\ep)$ is
reduced to solve $(a^{\ep}, \, w^\ep,\, Q^{\ep})$ through
\begin{equation}\label{1.13cz}
\begin{cases}
\pa_t a^{\ep} + (w^\ep +R_1^\ep+  v^{\ep}_{app} )\cdot\na a^{\ep}=0,\qquad (t,x)\in\R^+\times\R^3, \\
\pa_t w^\ep + w^\ep\cdot \na  w^\ep + w^\ep \cdot \na
(v^{\ep}_{app}+R_1^\ep) +
(v^{\ep}_{app}+R_1^\ep)\cdot \na w^\ep -\Delta w^\ep\\
\qquad + (1+a^\ep)\na Q^{\ep} =G^{\ep},\\
\dive w^\ep = 0,\\
 a^{\ep}(t,x_h,x_3)|_{t = 0} = \ep^{\delta_0}
a_0(x_h,\ep x_3),\quad w^\ep|_{t=0}=0,
\end{cases}
\end{equation}
with  $F_1^\ep$ given by \eqref{F-term-1} and
\begin{equation}
\label{1.10b}
\begin{split} G^{\ep} \eqdefa&
  a^{\ep}(\Delta w^{\ep} +\Delta R_1^{\ep}+\Delta v^{\ep}_{app}-\nabla \Pi_{app}^{\ep}\\
&\qquad -R_1^{\ep}\cdot \na (R_1^{\ep}+v^{\ep}_{app})
 -v^{\ep}_{app}\cdot \na R_1^{\ep} - \e[F_1]_\e.
 \end{split}
\end{equation}
We shall follow the same line of the proof of Theorem \ref{th1} to
construct the global solution of\refeq{1.13cz}. Namely, we shall
first estimate $a^\e$ in the isentropic Besov spaces
$$
\wt{L}^\infty(\R^+;\cB^{\f3p}_p(\R^3))\cap\wt{L}^\infty(\R^+;\cB^{-1+\f3q}_q(\R^3))
$$
for $q$ in~$]\f32,2[$ and~$p$ in~$]3,4[,$ and then we estimate $w^\e$ in
the anisotropic Besov spaces
$$
\wt{L}^\infty(\R^+;\fB^{0}_p(\R^3))\cap L^1(\R^+;\fB^{2}_p(\R^3)).
$$
With these estimates, we repeat the argument at the beginning of
Section 3 to construct the unique global solution of \eqref{1.3}
with data\refeq{1.6cz}.

\subsection{Technical Lemmas}

For simplicity, we shall neglect the subscript $\e$ in the rest of
this section. Let us first recall Lemma 3.2 and inequality~(4.7)
of\ccite{GHZ1}.

\begin{lem}
\label{le2.0cz}
{\sl Let $(v^h,\Pi_0)$ be a smooth enough solution of ($NS2D_3$) and
$R_1$ be determined by \eqref{remainder-system-2}. Then under the
assumptions of Theorem \ref{thm5}, for any $\alpha $ in~$\mathbb{N}^3$, one has
\begin{equation*}
\begin{split}
&\|\pa^{\alpha}v^h\|_{\widetilde{L}^{\infty}(\mathbb{R}^+;\fB_{2}^{0})}
+ \|\pa^{\alpha}v^h\|_{L^1(\mathbb{R}^+;\fB_{2}^{2,\frac{1}{2}})}
+\|\pa^{\al} \Pi_0\|_{L^1(\mathbb{R}^+;\fB_{2}^{0})} \leq C_{v_0},
\\
&\|\pa^{\al}\pa_3
v^h\|_{\widetilde{L}^{\infty}(\mathbb{R}^+;\fB_{2}^{-1,\frac{1}{2}})}
 + \|\pa^{\al}\pa_3 v^h\|_{L^1(\mathbb{R}^+;\fB_{2}^{1,\frac{1}{2}})}\leq
 C_{v_0},
\end{split}
\end{equation*}
and \beno
\begin{split}
\|R_1\|_{\widetilde{L}^{\infty}_t(\R^+;\fB^{0}_{2})}+\sum_{|\al|\leq
1}\|\p^\al\na R_1\|_{L^1(\R^+;\fB^{1,\frac{1}{2}}_{2})}\leq
C_{v_0}\ep.
\end{split}
\eeno }
\end{lem}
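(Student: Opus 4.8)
The plan is to treat $(NS2D_3)$ as a family of two-dimensional Navier--Stokes systems parametrised by the slow variable $y_3$, and to prove the three groups of estimates in the order stated, since each group feeds the next: first the bounds on $v^h$ and $\Pi_0$, then those on $\pa_3v^h$ (and inductively on $\pa^\alpha\pa_3v^h$), and finally the $O(\ep)$ bound on $R_1$. Throughout, the vertical variable enters only through the initial data, so the vertical Littlewood--Paley blocks $\Delta_k^v$ commute with the horizontal operators $\Delta_h$, $\na_h$, $\Delta_h^{-1}$; the vertical index $\frac12$ is chosen precisely because $B^{\frac12}_{2,1}(\R_{y_3})$ is a critical Besov algebra, which lets me treat the $y_3$-products coming from the nonlinearity by the product law of Lemma~\ref{lem2.2}. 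The role of the hypotheses of Theorem~\ref{thm5} is to supply, slice by slice in $y_3$ and integrated in $L^2(\R_{y_3})$, both \emph{negative} horizontal regularity (through $\dot H^{-1}(\R^2)$) and arbitrarily high regularity (through the control of all derivatives), so that every norm below is finite and can be absorbed into $C_{v_0}$.

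For the bounds on $v^h$ I would start from the classical global theory of two-dimensional Navier--Stokes: for each fixed $y_3$ the energy and enstrophy identities together with the maximum principle for the vorticity $\om=\curl_h v^h$ give slice-wise global control, and integration in $y_3$ turns these into global bounds in $L^2(\R^3)$-based spaces. The decisive point is that the assumption $\pa^\alpha v_0^h\in L^2(\R_{y_3};\dot H^{-1}(\R^2))$ converts, via the elementary gain-of-two-derivatives identity $\int_0^\infty 2^{j(s+2)}e^{-ct2^{2j}}\,dt\simeq 2^{js}$ for the horizontal heat semigroup, into $L^1(\R^+;\cdot)$ integrability of two extra horizontal derivatives of the solution; this is exactly what makes $\|v^h\|_{L^1(\R^+;\fB^{2,\frac12}_2)}$ finite over the whole half-line rather than only locally. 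I would establish this first for the linear heat part and then close the nonlinear Duhamel contribution by a bootstrap, using the globally finite quantity $\|\na_h v^h\|_{L^2(\R^+;L^2)}$ together with Lemma~\ref{lem2.2}, the higher horizontal regularity being furnished by the ``all derivatives'' hypothesis. The pressure is then recovered algebraically from $-\Delta_h\Pi_0=\dive_h\dive_h(v^h\otimes v^h)$, i.e. $\Pi_0=\Delta_h^{-1}\dive_h\dive_h(v^h\otimes v^h)$; since $\Delta_h^{-1}\dive_h\dive_h$ is a homogeneous multiplier of degree $0$ commuting with $\Delta_k^v$, the bound $\|\pa^\alpha\Pi_0\|_{L^1(\R^+;\fB^0_2)}\lesssim C_{v_0}$ follows by estimating $\pa^\alpha(v^h\otimes v^h)$ in $L^1(\R^+;\fB^0_2)$ through Lemma~\ref{lem2.2} and Lemma~\ref{propanisoiso} and the $v^h$ bounds just obtained.

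For the $\pa_3v^h$ bounds I would differentiate $(NS2D_3)$ in $y_3$: $w\eqdef\pa_3v^h$ solves the \emph{linearised} system $\pa_tw+v^h\cdot\na_hw-\Delta_hw=-w\cdot\na_hv^h-\na_h\pa_3\Pi_0$ with datum $\pa_3v_0^h\in\fB^{-1,\frac12}_2$. As this is linear in $w$ with coefficients already controlled by the first step, an energy-type estimate in the anisotropic Besov norm (again using Lemma~\ref{lem2.2} for $w\cdot\na_hv^h$ and the algebra property in $y_3$) gives $\|w\|_{\wt{L}^\infty(\R^+;\fB^{-1,\frac12}_2)}+\|w\|_{L^1(\R^+;\fB^{1,\frac12}_2)}\lesssim\|\pa_3v_0^h\|_{\fB^{-1,\frac12}_2}\exp\big(C\|\na_hv^h\|_{L^1(\R^+;L^\infty)}\big)$, the exponent being finite by the first step; applying $\pa^\alpha$ and iterating yields the stated bounds for every $\alpha$. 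Finally, for $R_1$ I use that $\dive V^h=\pa_3\dive_hv^h=0$ forces $\na\Pi_v=0$ in \eqref{remainder-system-2}, so $R_1=-\ep\int_0^te^{(t-s)\Delta}\pa_3V^h(s)\,ds$ is a pure forced heat flow; the explicit prefactor in $F_2^\ep=\ep\pa_3V^h$ produces the factor $\ep$, and heat smoothing combined with the $\pa_3v^h$ bounds of the second step---transported to $V^h=[\pa_3v^h]_\ep$ with the usual bookkeeping of the vertical rescaling $x_3\mapsto\ep x_3$---gives $\|R_1\|_{\wt{L}^\infty(\R^+;\fB^0_2)}+\sum_{|\alpha|\le1}\|\pa^\alpha\na R_1\|_{L^1(\R^+;\fB^{1,\frac12}_2)}\lesssim C_{v_0}\ep$.

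The step I expect to be the main obstacle is the \emph{global-in-time} ($\int_0^\infty$) integrability in the first group of estimates: local-in-time bounds are routine, but closing the nonlinear bootstrap so that the $L^1(\R^+;\cdot)$ norms are finite over the entire half-line requires combining the favourable two-dimensional global energy structure with the $\dot H^{-1}$-based heat smoothing, all while carrying the anisotropic vertical $B^{\frac12}_{2,1}$ regularity through the quadratic nonlinearity. This is precisely the content imported from Lemma~3.2 and inequality~(4.7) of \cite{GHZ1}; once it is in hand, the second and third groups are comparatively soft consequences.
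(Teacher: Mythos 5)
You should first be aware that the paper contains no proof of this lemma: it is introduced by the sentence ``Let us first recall Lemma 3.2 and inequality (4.7) of \cite{GHZ1}'', so the three groups of estimates are imported wholesale from Gui--Huang--Zhang, the $v^h$ and $\Pi_0$ bounds going back to the slow-variable analysis of \cite{c-g}. There is therefore no in-paper argument to measure yours against; what you have produced is a reconstruction of the proof in those references, and its architecture is the right one: slice-wise global $2$-D theory integrated in $y_3$, the $L^2(\R_{y_3};\dot H^{-1}(\R^2))$ hypothesis converted by horizontal heat smoothing into \emph{global} $L^1$-in-time integrability of two extra horizontal derivatives, the pressure through the degree-zero multiplier $\Delta_h^{-1}\dive_h\dive_h$, the linear equation satisfied by $\pa_3 v^h$ whose coefficients are controlled by the first step, and the divergence-free forced heat flow for $R_1$. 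You also correctly isolate the genuine crux, namely closing the bootstrap on the whole half-line.

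The one place where your sketch is too thin is the third group, and it is precisely the delicate point. The paper notes, just before \eqref{remainder-system-2}, that $F^\ep_2$ does \emph{not} belong to $L^1(\R^+;\fB^0_p)$: the second group only controls $\pa^\al\pa_3 v^h$ in $L^1(\R^+;\fB^{1,\frac12}_2)$ and $\wt L^\infty(\R^+;\fB^{-1,\frac12}_2)$, and no interpolation of these reaches $L^1(\R^+;\fB^{0,\frac12}_2)$. This failure is the entire reason $R_1$ is peeled off as a separate heat flow, so ``heat smoothing combined with the $\pa_3 v^h$ bounds'' cannot be left as a one-line step: the norms in the conclusion ($\na R_1$ in $L^1(\fB^{1,\frac12}_2)$ rather than $R_1$ in $L^1(\fB^{2,\frac12}_2)$, and $R_1$ itself only in $\wt L^\infty(\fB^0_2)$) are chosen exactly so that each dyadic block of the Duhamel integral can be fed either from the $L^1(\fB^{1,\frac12}_2)$ bound or from the $\wt L^\infty(\fB^{-1,\frac12}_2)$ bound on $\pa^\al\pa_3 v^h$, with the factor $\ep$ coming from $F^\ep_2=\ep^2\,[(\pa_3^2v^h,0)]_\ep$ together with the exact invariance of the $\fB^{s,\frac12}_2$ norms under $x_3\mapsto\ep x_3$ (this invariance, which you call ``the usual bookkeeping'', deserves to be stated: it is why the vertical index is $\frac12=\frac1p$ with $p=2$). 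If you intend a self-contained proof rather than a pointer to \cite{GHZ1}, that block-by-block accounting is the part that must actually be written out.
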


\begin{lem}\label{lem2.4cz}
{\sl Let  $q$ be in~$]1,2[,$~$p$ in~$ ]3,4[,$ and  $G$ be given by
\eqref{1.10b}. Then under the assumptions of Theorem \ref{thm5},  one
has \beq\label{2.7cz}
\begin{split}\|G\|_{L^1_t(\fB^{0}_p)}\lesssim &
C\|a\|_{L^\infty_t(\cB^{\f3p}_p)}\|\D w\|_{L^1_t(\fB^{0}_p)}+
C_{v_0}\bigl(\e+\|a\|_{L^\infty_t(\cB^{\f3p}_p)}+\e\|a\|_{L^\infty_t(\cB^{-1+\f3q}_q)}\bigr).
\end{split}
\eeq }
\end{lem}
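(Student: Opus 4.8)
The plan is to expand $G$ from \eqref{1.10b} into its eight constituent terms and estimate each in $L^1_t(\fB^0_p)$: the four terms carrying the factor $a$, namely $a\D w$, $a\D R_1$, $a\D v_{app}$ and $-a\na\Pi_{app}$, and the four terms free of $a$, namely $-R_1\cdot\na R_1$, $-R_1\cdot\na v_{app}$, $-v_{app}\cdot\na R_1$ and $-\e[F_1]_\e$. Beyond the tools already used for Theorem \ref{th1} I rely on three facts. First, the product law of Lemma \ref{lem2.2} and the isotropic-to-anisotropic embedding of Lemma \ref{propanisoiso}. Second, the scaling identity $\|[b]_\e\|_{\fB^{s,\sigma}_2}=\e^{\sigma-\frac12}\|b\|_{\fB^{s,\sigma}_2}$ (a direct consequence of the vertical rescaling in the $L^2$ norm), which shows that the vertical index $\sigma=\frac12$ is \emph{scale invariant}; this is precisely why Lemma \ref{le2.0cz} is stated in the spaces $\fB^{\cdot,\frac12}_2$. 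Third, the anisotropic Sobolev embedding $\fB^{s,\frac12}_2\hookrightarrow\fB^{s-1+\frac2p,\frac1p}_p$ for $p\geq2$, read off from the Bernstein inequalities of Lemma \ref{lem2.1}; in particular $\fB^{0,\frac12}_2\hookrightarrow\fB^0_p$ and $\fB^{1,\frac12}_2\hookrightarrow\fB^{\frac2p,\frac1p}_p$.

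The term $a\D w$ alone produces the first term on the right-hand side of \eqref{2.7cz}. Embedding $a\in\cB^{\f3p}_p$ into $\fB^{\frac2p,\frac1p}_p$ by Lemma \ref{propanisoiso} and writing $\D w\in\fB^0_p=\fB^{-1+\frac2p,\frac1p}_p$, the product law of Lemma \ref{lem2.2} with both integrability indices equal to $p$ (licit since $p<4$ gives $s_1+s_2=-1+\frac4p>0$) yields $\|a\D w\|_{\fB^0_p}\lesssim\|a\|_{\cB^{\f3p}_p}\|\D w\|_{\fB^0_p}$; integration in time gives the first term.

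For the remaining $a$-terms I split each rescaled approximate-solution factor into a scale-invariant piece and a piece carrying an explicit power of $\e$ or the smallness of $R_1$: $\D v_{app}=[\D_h v^h]_\e+\e^2[\p_3^2 v^h]_\e$, $\na\Pi_{app}=([\na_h\Pi_0]_\e,\e[\p_3\Pi_0]_\e)$, and $\D R_1=(\p_1^2+\p_2^2)R_1+\p_3^2 R_1$. By Lemma \ref{le2.0cz} and the scale invariance of $\fB^{\cdot,\frac12}_2$, the horizontal pieces $[\D_h v^h]_\e$, $[\na_h\Pi_0]_\e$ and $(\p_1^2+\p_2^2)R_1$ lie in $L^1(\fB^{0,\frac12}_2)$ and hence, by the Sobolev embedding above, in $L^1(\fB^0_p)$, with norm $C_{v_0}$ (and $C_{v_0}\e$ for the last one). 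Multiplying by $a\in\cB^{\f3p}_p$ as in the $a\D w$ step bounds these contributions by $C_{v_0}\|a\|_{L^\infty_t(\cB^{\f3p}_p)}$; the same bound absorbs $\e a[\p_3\Pi_0]_\e$ since $\e\leq1$. The delicate pieces are the vertical-second-derivative ones $\e^2 a[\p_3^2 v^h]_\e$ and $a\p_3^2 R_1$, where $[\p_3^2 v^h]_\e$ and $\p_3^2 R_1$ belong to $\fB^{1,\frac12}_2\hookrightarrow\fB^{\frac2p,\frac1p}_p$, whose horizontal index $\frac2p$ is too large to close the product against $\cB^{\f3p}_p$ inside $\fB^0_p$. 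I therefore pair them with $a$ measured in $\cB^{-1+\f3q}_q$: embedding $a$ into $\fB^{-1+\frac2q,\frac1q}_q$ by Lemma \ref{propanisoiso} (valid since $-1+\frac2q>0$ and $(-1+\frac2q)+\frac1q=-1+\frac3q$), the product law of Lemma \ref{lem2.2} with lower index $q$ carried by $a$ and upper index $p$ carried by the embedded factor (admissible because $\frac1q+\frac1p<1$ for $q\in]\f32,2[$, $p\in]3,4[$) lands exactly in $\fB^{-1+\frac2p,\frac1p}_p=\fB^0_p$. Using $\|\p_3^2 R_1\|_{L^1(\fB^{1,\frac12}_2)}\leq C_{v_0}\e$ and $\|[\p_3^2 v^h]_\e\|_{\fB^{1,\frac12}_2}=\|\p_3^2 v^h\|_{\fB^{1,\frac12}_2}\leq C_{v_0}$ together with $\e^2\leq\e$, these produce the term $\e\,C_{v_0}\|a\|_{L^\infty_t(\cB^{-1+\f3q}_q)}$.

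The four $a$-free terms are all $O(\e)$. Since $R_1$ and $\na R_1$ are $O(\e)$ in the spaces of Lemma \ref{le2.0cz}, while $v_{app}$, $\na v_{app}$ and $\p_3\Pi_0$ are $O(1)$ in the scale-invariant spaces $\fB^{\cdot,\frac12}_2$, the bilinear terms $R_1\cdot\na R_1$ (in fact $O(\e^2)$), $R_1\cdot\na v_{app}$, $v_{app}\cdot\na R_1$ and the linear term $\e[F_1]_\e$ are each controlled in $L^1_t(\fB^0_p)$ by $C_{v_0}\e$ via the product law of Lemma \ref{lem2.2} followed by the Sobolev embedding; this yields the remaining term $C_{v_0}\e$ in \eqref{2.7cz}.

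The main obstacle is the bookkeeping of the two previous paragraphs: one must decompose each rescaled factor into horizontal and vertical derivative parts, track the exact power of $\e$ coming from the scaling $[\cdot]_\e$ (using that $\sigma=\frac12$ is scale invariant), and, most importantly, recognise that the two norms of $a$ play genuinely different roles — the critical norm $\cB^{\f3p}_p$ for the low-horizontal-regularity pieces and the norm $\cB^{-1+\f3q}_q$ with $q\in]\f32,2[$ for the vertical-second-derivative pieces, the latter being the only choice for which the anisotropic indices close precisely at $\fB^0_p$.
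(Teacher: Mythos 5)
Your proof is correct and follows essentially the same route as the paper: term-by-term decomposition of $G$, Lemma\refer{propanisoiso} to embed the two isotropic norms of $a$ anisotropically, the product law of Lemma\refer{lem2.2} with the index $p$ for the horizontally regular pieces and the index $q<2$ for the vertical second derivatives $\e^2[\p_3^2v^h]_\e$ and $\p_3^2R_1$ (the only pairing that lands exactly in $\fB^0_p$), and Lemma\refer{le2.0cz} combined with the scale invariance of $\fB^{\cdot,\frac12}_2$ and Bernstein embeddings to bound the approximate solution. The only divergence is the parenthesization of\refeq{1.10b} (you read the bilinear terms $R_1\cdot\na(R_1+v_{app})$, $v_{app}\cdot\na R_1$ as free of the factor $a$, the paper's proof keeps them multiplied by $a$); both readings yield\refeq{2.7cz}.
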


\begin{proof} Notice that $p<4,$ applying Lemma \ref{propanisoiso} and Lemma \ref{lem2.2} yields
\beno \begin{split} \|a(\D w+\D_h R_1)\|_{L^1_t(\fB^{0}_p)}\lesssim&
\|a\|_{L^\infty_t(\fB^{\f2p,\f1p}_p)}\bigl(\|\D
w\|_{L^1_t(\fB^{0}_p)}+\|\D_h
R_1\|_{L^1_t(\fB^{0}_p)}\bigr)\\
\lesssim & \|a\|_{L^\infty_t(\cB^{\f3p}_p)}\bigl(\|\D
w\|_{L^1_t(\fB^{0}_p)}+\| R_1\|_{L^1_t(\fB^{1+\f2p,\f1p}_p)}\bigr).
\end{split}
\eeno Similarly as $q$ is in~$]1,2[$, $1-\f2q>0$ so that one has \beno
\begin{split} \|a\p_3^2
R_1\|_{L^1_t(\fB^{0}_p)}\lesssim&
\|a\|_{L^\infty_t(\fB^{-1+\f2q,\f1q}_q)}\|\p_3^2
R_1\|_{L^1_t(\fB^{\f2p,\f1p}_p)}\\
\lesssim & \|a\|_{L^\infty_t(\cB^{-1+\f3q}_q)}\|\p_3^2
R_1\|_{L^1_t(\fB^{\f2p,\f1p}_p)}.
\end{split}
\eeno
 It follows the same line that
 \beno
 \|a\D v_{app}\|_{L^1_t(\fB^{0}_p)}\lesssim \|a\|_{L^\infty_t(\cB^{\f3p}_p)}\|
v^h\|_{L^1_t(\fB^{1+\f2p,\f1p}_p)}+\e^2\|a\|_{L^\infty_t(\cB^{-1+\f3q}_q)}\|\p_3^2
v^h\|_{L^1_t(\fB^{\f2p,\f1p}_p)}, \eeno and \beno
\|a\na\Pi_{app}\|_{L^1_t(\fB^{0}_p)}\lesssim
\|a\|_{L^\infty_t(\cB^{\f3p}_p)}\|\na\Pi_0\|_{L^1_t(\fB^{0}_p)}.
\eeno Whereas applying Lemma \ref{lem2.2} twice leads to
$$
\|aR_1\cdot\na(R_1+v_{app})\|_{L^1_t(\fB^{0}_p)}\lesssim
\|a\|_{L^\infty_t(\cB^{\f3p}_p)}\|R_1\|_{L^\infty_t(\fB^{0}_p)}\bigl(\|\na
R_1\|_{L^1_t(\fB^{\f2p,\f1p}_p)}+\|\na
v^h\|_{L^1_t(\fB^{\f2p,\f1p}_p)}\bigr)
$$
 and
 $$ \|av_{app}\cdot\na R_1\|_{L^1_t(\fB^{0}_p)}\lesssim
\|a\|_{L^\infty_t(\cB^{\f3p}_p)}\|v^h\|_{L^\infty_t(\fB^{0}_p)}\|
R_1\|_{L^1_t(\fB^{1+\f2p,\f1p}_p)}.
$$
As a consequence, we
obtain
\beno
\begin{split}
\|G\|_{L^1_t(\fB^{0}_p)}\lesssim & \|a\|_{L^\infty_t(\cB^{\f3p}_p)}
\Bigl(\|\D w\|_{L^1_t(\fB^{0}_p)}+\|
R_1\|_{L^1_t(\fB^{1+\f2p,\f1p}_p)}+\|
v^h\|_{L^1_t(\fB^{1+\f2p,\f1p}_p)}+\|\na\Pi_0\|_{L^1_t(\fB^{0}_p)}\\
&\quad
{}+\bigl(\|R_1\|_{L^\infty_t(\fB^{0}_p)}+\|v^h\|_{L^\infty_t(\fB^{0}_p)}\bigr)
\bigl(\|\na R_1\|_{L^1_t(\fB^{\f2p,\f1p}_p)}+\|\na
v^h\|_{L^1_t(\fB^{\f2p,\f1p}_p)}\bigr)\Bigr)\\
&{}+\|a\|_{L^\infty_t(\cB^{-1+\f3q}_q)}\bigl(\|\p_3^2
R_1\|_{L^1_t(\fB^{\f2p,\f1p}_p)}+\e^2\|\p_3^2
v^h\|_{L^1_t(\fB^{\f2p,\f1p}_p)}\bigr)+\e\|\p_3\Pi_0\|_{L^1_t(\fB^{0}_p)},
\end{split}
\eeno from which, Lemma \ref{lem2.1} and   Lemma \ref{le2.0cz},
 we conclude the proof of \eqref{2.7cz}.
\end{proof}

\subsection{The proof of Theorem
\ref{thm5}} It follows from the argument in Subsection
\ref{subsect4.1} that we only need to solve \eqref{1.13cz} globally
for $\ep$ sufficiently small in order to prove  Theorem \ref{thm5}.
Given data \eqref{1.6cz}, it is well-known that \eqref{1.3} has a
unique local solution $(a,u)$ on $(0, T^\ast)$ for some $T^\ast>0.$
Without loss of generality, we may assume that $T^\ast$ is the
lifespan of $(a,u).$ Of course, the solution $(a^\ep,w^\ep,Q^\ep)$
of \eqref{1.13cz} entails this lifespan $T^\ast.$ Similar to the
proof of Theorem \ref{th1} in Section 3, we denote \beq\label{w.1q}
T^\clubsuit\eqdefa \sup\bigl\{ T<T^\ast\ /\ \eta_T\eqdefa
\|a\|_{L^\infty_T(\cB^{\f3p}_p)}+\|w\|_{L^\infty_T(\fB^0_p)}+\|w\|_{L^1_T(\fB^2_p)}\leq
\d\ \bigr\}, \eeq for some sufficiently small positive constant
$\d,$ which will be chosen later on.

We also define
 \beq \label{2.1cz}
 \begin{split}
f_{\la}(t)\eqdefa& f(t)\exp\Bigl(-\la\int_0^t
V_h(t')\,dt'\Bigr\}\quad\mbox{with}\quad V_h(t)\eqdefa
\|v^h(t)\|_{\cB^{\f2p,\f1p}_p}^2+\|v^h(t)\|_{\cB^{1+\f2p,\f1p}_p},\\
&\imath(a,w)\eqdefa w\cdot \na  w+ w\cdot \na
(v_{app}+R_1)+(v_{app}+R_1)\cdot \na w-G. \end{split}\eeq Then
thanks to \eqref{1.13cz}, $w_\la$ solves \beno\begin{split} \pa_t
w_\la+& \la V_h(t)w_\la
 -\Delta w_\la + (1+a)\na Q_\la+\imath(a,w)_\la
=0. \end{split} \eeno Applying $\D_j^h\D_k^v$ to the above equation,
then taking the $L^2$ inner product of the resulting equation with
$|\D_j^h\D_k^v w_{\la}|^{p-2}\D_j^h\D_k^v w_{\la}$ for $p$ in~$]3,4[$
and integrating the resulting equation over $[0,t],$ we obtain
 \beno
\begin{split}
\|\D_j^h\D_k^vw_\la\|_{L^\infty_t(L^p)}
&+\la\int_0^tV_h(t')\|\D_j^h\D_k^vw_\la\|_{L^p}\,dt'\\
&+c(2^{2j}+2^{2k})\|\D_j^h\D_k^vw_\la\|_{L^1_t(L^p)}\leq \bigl \|
\bigr((1+a)\na Q -\imath(a,w)\bigr)_\la\bigr\|_{L^1_t(L^p)},
\end{split} \eeno for some $c>0.$ After summation, this gives
\beq\label{2.2cz}
\begin{split}
\|w_\la\|_{\wt{L}^\infty_t(\fB^{0}_p)}+\la\int_0^tV_h(t')\|w_\la(t')\|_{\fB^{0}_p}\,dt'
&+c\|w_\la\|_{L^1_t(\fB^{2}_p)}
\\
&\lesssim \bigl\|\bigr((1+a)\na Q
-\imath(a,w)\bigr)_\la\bigr\|_{L^1_t(\fB^{0}_p)}.
\end{split} \eeq
Lemma \ref{lemmapressureinhomo} and \eqref{1.13cz} implies \beno
\bigl\|\bigl((1+a)\na Q
-\imath(a,w)\bigr)_\la\bigr\|_{\fB^{0}_p}\lesssim
\|(\imath(a,w))_\la\|_{\fB^{0}_p}. \eeno
 And as $p<4,$ applying Lemma
\ref{lem2.2} leads to \beno \begin{split} &\|w\cdot\na
w_\la\|_{\fB^{0}_p}\lesssim \|w\|_{\fB^{0}_p}\|\na
w_\la\|_{\fB^{\f2p,\f1p}_p}\lesssim \|w\|_{\fB^{0}_p}\|
w_\la\|_{\fB^{2}_p},\\
&\|w_\la\cdot\na R_1\|_{\fB^{0}_p}\lesssim \|\na
R_1\|_{\fB^{\f2p,\f1p}_p}\|w_\la\|_{\fB^{0}_p},\\
 &\|R_1\cdot\na w_\la\|_{\fB^{0}_p}\lesssim
\|R_1\|_{\fB^{0}_p}\| w_\la\|_{\fB^{2}_p}. \end{split} \eeno While
notice that for $[b]_\e(x)=b(x_h,\e x_3),$ \beno w_\la\cdot\na
v_{app}=w^h_\la\cdot[\na_h v^h]_\ep+\e w^3_\la[\p_3v^h]_\e, \eeno we
get, by applying Lemma \ref{lem2.2} once again, that
\beno\begin{split} \|w_\la\cdot\na v_{app}\|_{\fB^{0}_p}\lesssim &
\|v^h\|_{\fB^{1+\f2p,\f1p}_p}\|w_\la^h\|_{\fB^{0}_p}+\e\|\p_3v^h\|_{\fB^{\f2p,\f1p}_p}
\|w_\la^3\|_{\fB^{0}_p}. \end{split} \eeno Along the same line, one
has \beno
\begin{split}
&\|v_{app}\cdot\na w_\la\|_{\fB^{0}_p)}\lesssim
\|v^h\|_{\fB^{\f2p,\f1p}_p}\|\na_h w_\la\|_{\fB^{0}_p}.
\end{split}
\eeno

Therefore, substituting the above estimates into \eqref{2.2cz}, we infer that
 for any~$t\leq T^\clubsuit$,
\beq
\label{2.4cz}
\begin{split} &\|w_\la\|_{\wt{L}^\infty_t(\fB^{0}_p)}+\la\int_0^tV_h(t')\|w_\la(t')\|_{\fB^{0}_p}\,dt'
+c\|w_\la\|_{L^1_t(\fB^{2}_p)}\\
&\lesssim \|G_\la\|_{L^1_t(\fB^{0}_p)}+\bigl(\|\na
R_1\|_{L^1_t(\fB^{\f2p,\f1p}_p)}+\e\|\p_3v^h\|_{L^1_t(\fB^{\f2p,\f1p}_p)}\bigr)\|w_\la\|_{L^\infty_t(\fB^{0}_p)}\\
&\qquad{}+
\bigl(\|w\|_{L^\infty_t(\fB^{0}_p)}+\|R_1\|_{L^\infty_t(\fB^{0}_p)}\bigr)
\|
w_\la\|_{L^1_t(\fB^{2}_p)}\\
&\qquad{}+\int_0^t\bigl(\|v^h\|_{\fB^{\f2p,\f1p}_p}\|
w_\la\|_{\fB^{\f2p,\f1p}_p}+\|v^h\|_{\fB^{1+\f2p,\f1p}_p}\|w_\la\|_{\fB^{0}_p}\bigr)\,dt'.
\end{split}
\eeq Whereas it follows from the simple interpolation in the
anisotropic Besov spaces that \beno
\begin{split}
\int_0^t\|v^h\|_{\fB^{\f2p,\f1p}_p}\|
w_\la\|_{\fB^{\f2p,\f1p}_p}\,dt'\lesssim
\Bigl(\int_0^t\|v^h\|_{\fB^{\f2p,\f1p}_p}^2\|
w_\la\|_{\fB^{0}_p}\,dt'\Bigr)^{\f12}\|
w_\la\|_{L^1_t(\fB^{2}_p)}^{\f12}, \end{split} \eeno from which and
\eqref{2.4cz}, we infer for $t\leq  T^\clubsuit$
 \beno
\begin{split}
 &\|w_\la\|_{\wt{L}^\infty_t(\fB^{0}_p)}+\la\int_0^tV_h(t')\|w_\la(t')\|_{\fB^{0}_p}\,dt'
+\f{c}2\|w_\la\|_{L^1_t(\fB^{2}_p)}\\
&\qquad\lesssim \|G_\la\|_{L^1_t(\fB^{0}_p)}+\bigl(\|\na
R_1\|_{L^1_t(\fB^{\f2p,\f1p}_p)}+\e\|\p_3v^h\|_{L^1_t(\fB^{\f2p,\f1p}_p)}\bigr)\|w_\la\|_{L^\infty_t(\fB^{0}_p)}\\
&\qquad\qquad\quad+
\bigl(\|w\|_{L^\infty_t(\fB^{0}_p)}+\|R_1\|_{L^\infty_t(\fB^{0}_p)}\bigr) \|
w_\la\|_{L^1_t(\fB^{2}_p)}+\int_0^tV_h(t')\|w_\la(t')\|_{\fB^{0}_p}\,dt',
\end{split}
\eeno for $V_h(t)$ defined by \eqref{2.1cz}. Taking $\la\geq C$ in
the above inequality and applying Lemma \ref{le2.0cz} and Lemma
\ref{lem2.4cz}, we obtain  for $t\leq  T^\clubsuit$
\beq\label{2.5cz}
\begin{split} \|w_\la\|_{\wt{L}^\infty_t(\fB^{0}_p)}
&+\f{c}2\|w_\la\|_{L^1_t(\fB^{2}_p)} \leq
C_{v_0}\bigl(\e+\|a\|_{L^\infty_t(\cB^{\f3p}_p)}+\e\|a\|_{L^\infty_t(\cB^{-1+\f3q}_q)}\\
&+\e\|w_\la\|_{\wt{L}^\infty_t(\fB^{0}_p)}\bigr)
+C\bigl(\|a\|_{L^\infty_t(\cB^{\f3p}_p)}+\|w\|_{\wt{L}^\infty_t(\fB^{0}_p)}+C_{v_0}\e\bigr)\|
w_\la\|_{L^1_t(\fB^{2}_p)}.
\end{split}
\eeq
Then taking $\d\leq \f{c}{8C}$ in \eqref{w.1q} and
$\e\leq\min\bigl\{\f{c}{8CC_{v_0}}, \f{1}{2C_{v_0}}\bigr\},$ we deduce
from \eqref{2.5cz} that \beq\label{2.10cz}
\begin{split}
\|w_\la&\|_{\wt{L}^\infty_t(\fB^{0}_p)}+\f{c}4\|w_\la\|_{L^1_t(\fB^{2}_p)}
\leq
2C_{v_0}\bigl(\e+\|a\|_{L^\infty_t(\cB^{\f3p}_p)}+\e\|a\|_{L^\infty_t(\cB^{-1+\f3q}_q)}\bigr)
\end{split}
\eeq for $t\leq T^\clubsuit.$

On the other hand, applying Lemma \ref{lem3.2cz} to the free
transport equation in \eqref{1.13cz} that for any $s$ in~$]0,1[$,
\beq\label{2.13cz}
\begin{split}
\|a\|_{L^\infty_t(\cB^s_p)}\leq &
\|a_{0,\e}\|_{\cB^s_p}\exp\Bigl(\|\na
R_1\|_{L^1_t(\cB^{\f2p,\f1p}_p)}+\|\na
w\|_{L^1_t(\cB^{\f2p,\f1p}_p)}+\|\na
v^h\|_{L^1_t(\cB^{\f2p,\f1p}_p)}\Bigr)\\
\leq &C_{v_0}\e^{\s-\f1p}\|a_0\|_{\cB^s_p} \quad\mbox{for}\quad
t\leq T^\clubsuit.
\end{split}
\eeq As $q$ is in~$]\f32,2[,$ we can apply his result
with~$-1+\f3q.$  Together with \eqref{2.10cz}  this ensures that,
for any~$t\leq T^\clubsuit$, \beq \label{2.11cz}
\begin{split}
\|w_\la&\|_{\wt{L}^\infty_t(\fB^{0}_p)}+\f{c}4\|w_\la\|_{L^1_t(\fB^{2}_p)}
\leq
2C_{v_0}\bigl(\e+\e^{\s-\f1p}\|a_0\|_{\cB^{\f3p}_p}+\e^{1+\s-\f1q}\|a_0\|_{\cB^{-1+\f3q}_q}\bigr)\end{split}
\eeq

 By virtue of \eqref{2.1cz} and
\eqref{2.11cz}, we obtain \beno
\begin{split}
\|w\|_{\wt{L}^\infty_t(\fB^{0}_p)}+\f{c}4\|w\|_{L^1_t(\fB^{2}_p)}
\leq&
2C_{v_0}(\e^{\s-\f1p}\|a_0\|_{\cB^{\f3p}_p}+\e^{1+\s-\f1q}\|a_0\|_{\cB^{-1+\f3q}_q})\\
&\quad\times\exp\Bigl(C\bigl(
\|v^h\|_{L^1_t(\fB^{1+\f2p,\f1p}_p)}+\|v^h\|_{L^2_t(\fB^{\f2p,\f1p}_p)}^2\bigr)\Bigr)\\
\leq&
\bar{C}_{v_0}\bigl(\e^{\s-\f1p}\|a_0\|_{\cB^{\f3p}_p}+\e^{1+\s-\f1q}\|a_0\|_{\cB^{-1+\f3q}_q}\bigr)
\quad\mbox{for} \quad t\leq T^\clubsuit.
\end{split}
\eeno Now as $\s>\f14,$ we can take $p_\s<4$ so that
$\s-\f1{p_\s}>0.$ Then for $\e$ small enough, we conclude that
$T^\ast=T^\clubsuit,$ and there holds \beq\label{2.12cz}
\|w\|_{\wt{L}^\infty_t(\fB^{0}_{p_\s})}+\f{c}4\|w\|_{L^1_t(\fB^{2}_{p_\s})}
\leq \bar{C}_{a_0,v_0}\e^{\s-\f1{p_\s}}\quad\mbox{for} \quad t\leq
T^\ast. \eeq With \eqref{2.13cz} and \eqref{2.12cz}, it is standard
to prove that $T^\ast=\infty$ and the global solution $(a,u)$ of
\eqref{1.3} such that
$$
a\in \cC([0,\infty);
\cB^{\f3{p_\s}}_{p_\s}(\R^3)) \cap \wt{L}^\infty(\R^+;
\cB^{\f3{p_\s}}_{p_\s}(\R^3))
$$ and
$$u \in \cC([0,\infty);
\cB^{-1+\f3{p_\s}}_{p_\s}(\R^3))\cap
\wt{L}^\infty(\R^+;\cB^{-1+\f3{p_\s}}_{p_\s}(\R^3)) \cap
L^1(\R^+;\cB^{1+\f3{p_\s}}_{p_\s}(\R^3)).
$$  The uniqueness part is
guaranteed by Theorem 1 of \cite{DM}. We thus complete the proof of
Theorem \ref{thm5}.

\bigskip

\noindent {\bf Acknowledgments.} Part of this work was done when we
were visiting Morningside Center of the Chinese Academy of Sciences.
We appreciate the hospitality and the financial support from MCM. P.
Zhang is partially supported by NSF of China under Grant 10421101
and 10931007, the one hundred talents' plan from Chinese Academy of
Sciences under Grant GJHZ200829 and innovation grant from National
Center for Mathematics and Interdisciplinary Sciences.
\medskip


\begin{thebibliography}{50}

\bibitem{abidi} H.  Abidi,  \'Equation de Navier-Stokes avec densit\'e et
viscosit\'e variables dans l'espace critique, {\it Revista Matem\'atica Iberoamericana}, {\bf 23}, 2007, pages   537--586.

\bibitem{AP}  H. Abidi and M. Paicu,  Existence globale pour un fluide
inhomog\'ene, {\it Annales de l'Institut  Fourier}, {\bf 57}, 2007, pages
 883--917.

 \bibitem{A-G-Z} H. Abidi, G. Gui, and P. Zhang, On the decay and stability to  global solutions of the $3-$D
inhomogeneous Navier-Stokes equations, {\it Communications on Pure and Applied Mathematics},  {\bf 64}, 2011, pages 832--881.

\bibitem{AGZ2} H. Abidi, G. Gui and P. Zhang,
 On the wellposedness of $3-$D inhomogeneous Navier-Stokes equations
in the critical spaces,  {\it Archive for Rational Mechanics and Analysis},  {\bf 204}, 2012, pages 189-230.

\bibitem{AGZ3} H. Abidi, G. Gui and P. Zhang, Wellposedness of $3-$D
inhomogeneous Navier-Stokes equations with highly oscillating
initial velocity feild, {\it Journal de  Math\' ematiques  Pures et  Appliqu\'ees} to appear.


\bibitem{AKM} S.~N.~ Antontsev,  A.~V.~ Kazhikhov, and   V.~N.~  Monakhov,  {\it Boundary
value problems in mechanics of nonhomogeneous fluids,} Translated
from the Russian. Studies in Mathematics and its Applications, 22.
North-Holland Publishing Co., Amsterdam, 1990.

\bibitem{BCD} H. Bahouri, J.~Y. Chemin and R. Danchin, {\it Fourier analysis and
nonlinear partial differential equations}, Grundlehren der
mathematischen Wissenschaften 343, Springer-Verlag Berlin
Heidelberg, 2011.


\bibitem{Bo} J.~M. Bony, Calcul symbolique et propagation des singularit\'es pour
les \'equations aux d\'eriv\'ees partielles non lin\'eaires, {\it
Annales Scientifiques de l' \'Ecole Normale  Sup\'erieure}, {\bf 14}, 1981, pages 209--246.



\bibitem{CDGG} J.~Y. Chemin, B. Desjardins, I. Gallagher and
 E. Grenier, Fluids with anisotropic viscosity, {\it Mod\'elisation
 Math\'ematique et Analyse Num\'erique}, {\bf 34}, 2000, 315-335.

\bibitem{c-g} J.~Y. Chemin and I. Gallagher,
Large, global solutions to the Navier-Stokes equations slowly
varying in one direction, {\it Transactions of the  American Mathematical Society}, {\bf 362}, 2010, pages 2859-2873.

\bibitem{c-g-p} J.~Y. Chemin, I. Gallagher, and M. Paicu,  Global regularity for some classes of large
  solutions to the Navier-Stokes equations, {\it Annals  of Mathematics}, {\bf 173}, 2011, pages 983-1012.


\bibitem{CZ1}J.~Y. Chemin and P.  Zhang, On the global wellposedness  to the 3-D incompressible anisotropic
 Navier-Stokes equations, {\it Communications in  Mathematical  Physics}, {\bf 272}, 2007, pages
 529--566.

\bibitem{danchin} R. Danchin, Density-dependent incompressible viscous fluids in
critical spaces, {\it Proceedings of the Royal Society of Edinburgh Sect. A}, {\bf 133}
2003, pages  1311--1334.

\bibitem{danchin2}  R. Danchin, Local and global well-posedness results for flows of inhomogeneous
viscous fluids, {\it Advances in Differential Equations}, {\bf 9},  2004, pages
353--386.


\bibitem{DM} R. Danchin and P.~B. Mucha, A Lagrangian approach for
the incompressible Navier-Stokes equations with variable density,
{\it Communications on Pure and Applied Mathematics}, {\bf 65}, 2012, pages 1458-1480.

\bibitem{fujitakato}
H. Fujita and T. Kato, On the Navier-Stokes initial value problem I,
{\em   Archive for Rational Mechanics and Analysis}, {\bf 16}, 1964, pages 269--315.

\bibitem{GHZ1} G. Gui, J. Huang and P. Zhang, Large global solutions to the
$3-$D inhomogeneous Navier-Stokes equations,  {\it  Journal of  Functional  Analysis}, {\bf 261}, 2011, pages 3181-3210;

\bibitem{GZ2} G. Gui and P. Zhang, Stability to the global
solutions of 3-D Navier-Stokes equations, {\it Advances in Mathematics}, {\bf
225}, 2010, pages 1248-1284.


\bibitem{LS} O.~A.~  Lady\v zenskaja and  V.~A.~ Solonnikov,  The unique solvability
of an initial-boundary value problem for viscous incompressible
inhomogeneous fluids. (Russian) Boundary value problems of
mathematical physics, and related questions of the theory of
functions, 8, {\it Zap. Nau\v cn. Sem. Leningrad. Otdel. Mat. Inst.
Steklov. (LOMI)}, {\bf 52} (1975), 52--109, 218--219.


\bibitem{LP}  P.-L. Lions,   {\it Mathematical topics in fluid mechanics.
Vol. 1. Incompressible models}, Oxford Lecture Series in Mathematics
and its Applications, 3. Oxford Science Publications. The Clarendon
Press, Oxford University Press, New York, 1996.

\bibitem{Pa02} M. Paicu, \'Equation anisotrope
de Navier-Stokes dans des espaces  critiques,  {\it Revista Matem\'atica Ibero\-americana,} {\bf 21}, 2005, pages   179--235.


\bibitem{PZ1} M. Paicu and P. Zhang, Global solutions to the 3-D incompressible
 anisotropic Navier-Stokes system  in the critical spaces, {\it Communications in  Mathematical  Physics}, {\bf 307}, 2011, pages 713-759.

\bibitem{PZ2} M. Paicu and P. Zhang, Global solutions to the 3-D incompressible inhomogeneous
 Navier-Stokes system, {\it Journal of  Functional  Analysis},  {\bf 262}, 2012, pages 3556-3584.




\bibitem{Zhangt} T. Zhang, Erratum to: Global wellposed problem for the 3-D
incompressible anisotropic Navier-Stokes equations in an anisotropic
space, {\it Communications in  Mathematical  Physics}, {\bf 295}, 2010, pages  877-884.
\end{thebibliography}
\end{document}